\newtheorem{theorem}{Theorem}[section]
\newtheorem{corollary}[theorem]{Corollary}
\newtheorem{conjecture}[theorem]{Conjecture}
\newtheorem{proposition}[theorem]{Proposition}
\newtheorem{lemma}[theorem]{Lemma}
\newtheorem{observation}[theorem]{Observation}
\newtheorem{remark}[theorem]{Remark}
\theoremstyle{definition}
\DeclarePairedDelimiter\floor{\lfloor}{\rfloor}
\def\AA{{\mathcal A}}
\def\BB{{\mathcal B}} 
\def\RR{{\mathcal R}}
\def\SS{{\mathcal S}}
\def\II{{\mathcal I}}
\begin{document}
%\linenumbers

\title{Independence and matching numbers of some token graphs}

\author{
Hern\'an de Alba\thanks{Catedr\'atico CONACYT-UAZ, Mexico.
Email: heralbac@gmail.com, waltercarb@gmail.com.}
\and
Walter Carballosa\thanks{Department of Mathematics and Statistics, Florida International University, 11200 S.W. 8th Street, Miami, Florida 33199. U.S.A. Email: waltercarb@gmail.com.}
\and
Jes\'us Lea\~nos\thanks{Unidad Acad\'emica de Matem\'aticas, Universidad Aut\'onoma de Zacatecas, Calzada Solidaridad y Paseo La Bufa, Col.
Hidr\'aulica, C. P. 98060, Zacatecas, Zac., Mexico.
Email: jesus.leanos@gmail.com, luismanuel.rivera@gmail.com.} 
\and
Luis Manuel Rivera\footnotemark[3]
}
\date{}

\maketitle{}

\begin{abstract}

Let $G$ be a graph of order $n$ and let $k\in\{1,\ldots,n-1\}$. The $k$-token graph $F_k(G)$ of $G$, is the graph whose vertices are the $k$-subsets of $V(G)$, where two vertices are adjacent in $F_k(G)$ whenever their symmetric difference is an edge of $G$. We study the independence and matching numbers of $F_k(G)$. We present a tight lower bound for the  matching number of $F_k(G)$ for the case in which $G$ has either a perfect matching or an almost perfect matching. Also, we estimate the independence number for bipartite $k$-token graphs, and determine the exact value for some graphs. 

\end{abstract}

{\it Keywords:}  Token graphs; Matchings; Independence number; Johnson graphs.\\
{\it AMS Subject Classification Numbers:}    05C10; 05C69.

\section{Introduction}
 
Throughout this paper, $G$ is a simple finite graph of order $n\geq 2$ and $k\in \{1,\ldots, n-1\}$. The 
$k$-\textit{token graph} $F_k(G)$ of $G$ is the graph whose vertices are all the $k$-subsets of $V(G)$ and two $k$-subsets are adjacent if their symmetric difference is an edge of $G$. In particular, observe that $F_1(G)$ and $G$ are isomorphic, which, as usual, is denoted by $ F_1(G)\simeq G$. Moreover, note also that $F_k(G)\simeq F_{n-k}(G)$.  Often, we simply write token graph instead of $k$-token graph.

\subsection{Token graphs}
The origins of the notion of token graphs can be dated back to the 90's in the works of Alavi et al. \cite{alavi2, alavi3}, where the $2$-token graphs are called double vertex graphs. Later, Terry Rudolph \cite{rudolph} used $F_k(G)$ to study the graph isomorphism problem.
In such a work Rudolph gave examples of non-isomorphic graphs $G$ and $H$ which are cospectral, but with $F_2(G)$ and $F_2(H)$ non-cospectral. He emphasized that fact by making the following remark about the eigenvalues of $F_2(G)$:  ``{\em then the eigenvalues of this larger matrix are a graph invariant, and in fact are a more powerful invariant than those of the original matrix $G$}". 

In 2007 \cite{aude}, the notion of token graphs was extended by Audenaert et al. to any integer $k\in\{1,\ldots, n-1\}$ where $F_k(G)$ is called the {\em symmetric $k$-$th$ power of $G$}. It   was proved in \cite{aude} that the $2$-token graphs of strongly regular graphs with the same parameters are cospectral. In addition, some connections with generic exchange Hamiltonians in quantum mechanics were also discussed. Following Rudolph's study,  Barghi and Ponomarenko \cite{barghi} and Alzaga et al. \cite{alzaga} proved, independently, that for a given positive integer $m$ there exists infinitely many pairs of non-isomorphic graphs with cospectral $m$-token graphs.

In 2012 Ruy Fabila-Monroy et al. \cite{FFHH} reintroduced the concept of $k$-token graphs as ``{\em a model in which $k$ indistinguishable tokens move from vertex to vertex along the edges of a graph}" and began the systematic study of the combinatorial parameters of $F_k(G)$. In particular, the investigation presented in \cite{FFHH} includes the study of connectivity, diameter, cliques, chromatic number, Hamiltonian paths, and Cartesian products of token graphs. This line of research was continued for several authors (see, e.g., \cite{alba,token2, gomez, leatrujillo}).          

From the model of $F_k(G)$ proposed in \cite{FFHH} it is clear that the $k$-token graphs can be considered as part of several models of swapping in the literature  \cite{vanden,yama} that are part of reconfiguration problems (see e.g. \cite{caline,ratner}). 

As an example of the relationship between reconfiguration problems and problems involving the determination of parameters of $F_k(G)$, let us consider the problem of determining $diam(F_k(G))$ the diameter of $F_k(G)$ and the pebble motion (PM) problem. We recall that the PM problem asks if an arrangement $A$ of $1< k < |G|$ distinct pebbles numbered
 $1,\ldots, k$ and placed on $k$ distinct vertices of $G$ can be transformed into another given arrangement $B$ by moving the pebbles along edges of $G$ provided that at any given time at most one pebble is traveling along an edge and each vertex of $G$ contains at most one pebble. The PM problem has been studied in 
 ~\cite{auletta} and~\cite{miller} from the algorithmic point of view. Also, in such papers several applications of the PM problem have been mentioned, which include the management of memory in totally distributed computing systems and problems in robot motion planning. On the other hand, note that the PM problem is a variant of the problem of determining $diam(F_k(G))$ (the only difference is that in the  PM problem, the pebbles or tokens are distiguishable).  

The $k$-token graphs also are a generalization of Johnson graphs: if $G$ is the complete graph of order $n$, then $F_k(G)$ is isomorphic to the 
Johnson graph $J(n,k)$. The Johnson graphs have been studied from several approaches, see for example~\cite{alavi,riyono,terwilliger}. 
In particular, the determination of the exact value of the independence number 
$\alpha(J(n,k))$ of the Johnson graph, as far as we know, remains open in its generality, albeit it has been widely studied~\cite{brou,brou2,etzion,Jo,mira}. Possibly, the last effort to determine $\alpha(J(n,k))$ was made by
 K. G. Mirajkar et al. in 2016~\cite{mira}. In such a work they presented an exact formula for $\alpha(J(n,k))$, which is unfortunately wrong: 
the independence number of $J(7, 3)$ is $7$ because it is equal to the distance-4 constant weight code $A(7, 4, 3)$~\cite{brou}, but the formula in~\cite{mira} gives $6$.
   
\subsection{Main results}

The graph parameters of interest in this paper are the independence number 
and the matching number. A set $I$ of vertices in a graph $G$ is an \emph{independent set} if no two vertices in $I$ are adjacent; a \emph{maximal independent set} is an independent set such that it is not a proper subset of any independent set in $G$. The \emph{independence number} $\alpha(G)$ of  $G$ is the number of vertices in a largest independent set in $G$, and its computation is NP-hard~\cite{karp}.

A {\it matching} in a graph $G$ is a subset $M$ of edges of $G$ such that no two edges in $M$ have a vertex in common. In this paper 
we will use $||M||$ to denote the number of edges in the matching $M$. The {\it order} $|M|$ of a matching $M$ is the number of vertices involved in the edges of $M$, that is $|M|=2||M||$.  The {\it matching number} $\nu(G)$ of $G$ is the number of edges of any largest matching in $G$. A matching $M$ of $G$ is called {\it perfect matching} (respectively {\it almost perfect matching}) if the number of vertices (order) $|M|$ of $M$ is equal to
  $|G|$ the number of vertices of $G$ (respectively, if $|M|=|G|-1$). Note that $\nu(G)=|G|/2$ if and only if $G$ has a perfect matching. Similarly, $\nu(G)=(|G|-1)/2$ if and only if $G$ has an almost perfect matching. In this case, Jack Edmonds proved in 1965 that the matching number of a graph can be determined in polynomial time \cite{edmonds}.   

Our main results in this paper are Theorems \ref{th:match}, \ref{th:Kmn} and \ref{k2cicle}. In our attempt to estimate the independence number of the token graphs of certain families of bipartite graphs, we meet the following natural question:
  
{\bf Question 1.} If $\nu(G)= \lfloor |G| /2\rfloor$,  what can we say about $\nu(F_k(G))$?

In Theorem \ref{th:match} we answer Question 1 by providing a lower bound for 
$\nu(F_k(G))$ and exhibiting some graphs for which such a bound is tight. 
\begin{theorem}\label{th:match}
Let $G$ be a graph of order $n$ and let $k$ be an integer with $1\le k\le n-1$. If
$\nu(G)= \lfloor n /2\rfloor$, then 

\begin{itemize}
     
\item[(1)] $\nu(F_k(G))= \binom{n}{k}/2$, if $n$ is even and $k$ is odd.
\item[(2)] $\nu(F_k(G))\geq \big(\binom{n}{k}-\binom{\lfloor n/2\rfloor}{\lfloor k/2\rfloor} \big)/2$, otherwise.

\end{itemize}
Moreover, when $G$ is precisely a perfect matching or an almost perfect matching, the bound (2) is tight.   
\end{theorem}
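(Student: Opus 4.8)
The plan is to fix a maximum matching $M=\{e_1,\dots,e_t\}$ of $G$, with $e_i=\{a_i,b_i\}$ and $t=\lfloor n/2\rfloor$, and to use it to build an explicit large matching of $F_k(G)$ by the rule ``swap the tokens along the first available matching edge''. For a $k$-subset $S\subseteq V(G)$, say that an index $i$ is \emph{active for $S$} if $|S\cap e_i|=1$; when $S$ has at least one active index let $i(S)$ be the least one and put $\phi(S)=S\triangle e_{i(S)}$.

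The first step is the well-definedness lemma: if $i(S)$ exists then $\phi(S)\ne S$, the pair $\{S,\phi(S)\}$ is an edge of $F_k(G)$ (indeed $S\triangle\phi(S)=e_{i(S)}\in M\subseteq E(G)$), and $\phi(\phi(S))=S$. The point is that $S$ and $\phi(S)$ have exactly the same intersection with every $e_j$ for $j\ne i(S)$, since those edges are disjoint from $e_{i(S)}$, while $|\phi(S)\cap e_{i(S)}|=2-1=1$; hence $i(\phi(S))=i(S)$ and a second application of $\phi$ undoes the swap. Therefore $\phi$ is a fixed-point-free involution on the set $\mathcal{G}$ of $k$-subsets possessing an active index, and its orbits $\{S,\phi(S)\}$ form a matching of $F_k(G)$ of size $|\mathcal{G}|/2$.

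The second step is to compute $|\mathcal{G}|$ by a parity argument and to split into the three cases. If $a$ denotes the number of indices with $e_i\subseteq S$ and $b$ the number of active indices, then $2a+b+\varepsilon=k$, where $\varepsilon=1$ when $n$ is odd and the $M$-exposed vertex lies in $S$, and $\varepsilon=0$ otherwise; in particular $b\equiv k-\varepsilon\pmod 2$. When $n$ is even and $k$ is odd we have $\varepsilon=0$ and $b$ odd, so $b\ge 1$ for every $S$; thus $\mathcal{G}$ is everything, $\phi$ is a perfect matching of $F_k(G)$, and with the trivial inequality $\nu(F_k(G))\le\binom{n}{k}/2$ we get (1) (the identity $k\binom{n}{k}=n\binom{n-1}{k-1}$ also shows $\binom{n}{k}$ is even here). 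When $n$ is even and $k$ is even, the subsets outside $\mathcal{G}$ are precisely the unions of $k/2$ edges of $M$, so $|\mathcal{G}|=\binom{n}{k}-\binom{n/2}{k/2}$, giving (2). When $n$ is odd, a subset outside $\mathcal{G}$ is a union of edges of $M$ together with, when $k$ is odd, the exposed vertex; a short count gives $|\mathcal{G}|=\binom{n}{k}-\binom{(n-1)/2}{\lfloor k/2\rfloor}$, giving (3).

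For the tightness statements, let $G$ be a perfect matching (resp.\ an almost perfect matching, i.e.\ $\frac{n-1}{2}K_2$ together with one isolated vertex), so that $E(G)=M$. A $k$-subset $S$ has a neighbour in $F_k(G)$ exactly when some edge of $E(G)=M$ meets $S$ in a single vertex, that is, exactly when $S$ has an active index; hence the subsets outside $\mathcal{G}$ are precisely the isolated vertices of $F_k(G)$. Since an isolated vertex cannot be matched, $\nu(F_k(G))\le |\mathcal{G}|/2$, which is exactly the lower bound of (2) (resp.\ (3)); combined with that bound this yields equality. The one place that needs care is the parity bookkeeping in the odd case and, above all, verifying that the ``first active index'' rule really defines an involution, so that the selected edges form a matching and not merely a set of edges; once that lemma is established, everything else is routine counting.
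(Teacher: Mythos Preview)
Your argument is correct and, in fact, more direct than the paper's. The paper proceeds by induction on $n$: it isolates one matching edge $e=\{v,w\}$, sets $H=G-\{v,w\}$, and uses a decomposition lemma asserting that a matching of $F_k(H)$, a matching of $F_{k-2}(H)$, and the $\binom{n-2}{k-1}$ ``swap $v\leftrightarrow w$'' edges can be glued into a matching of $F_k(G)$; the three cases are then handled by separate inductions, with the case $k=2$ verified by hand as a base. Your ``first active index'' involution accomplishes in one stroke what that induction builds up recursively: it produces an explicit perfect matching on the set $\mathcal G$ of $k$-subsets that meet some $e_i$ in a single vertex, and the rest is the parity count $2a+b+\varepsilon=k$. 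The tightness argument (identifying $V(F_k(G))\setminus\mathcal G$ with the isolated vertices when $G$ itself is the matching) is the same in both proofs. Your approach is shorter, avoids the case analysis in the base steps, and yields the matching explicitly; the paper's decomposition lemma, on the other hand, is phrased so that it could in principle be reused with other inputs for $N$ and $L$, though it is not exploited that way here.
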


The proof of Theorem  \ref{th:match} is given in Section \ref{match}. Sections \ref{indep-bipar}, \ref{sec:Kmn}, and \ref{sec:C_p} are mostly devoted to the determination of the exact value of the independence numbers of the token graphs of certain common families of graphs. Our main results in this direction are 
the following results.

\begin{theorem}\label{th:Kmn}  
$\alpha(F_2(K_{m,n}))=\max \{mn, \binom{m+n}{2}-mn\}.$
\end{theorem}

In Section \ref{indep-bipar} we present some results which will be used in the proof of Theorem \ref{th:Kmn} and also help to determine some of the exact values of $\alpha(F_k(G))$ for  $G\in \{P_{m}, C_{2m}, K_{1, m}, K_{m,m},\-K_{m, m+1}\}$ and $2\leq k \leq |G|-2$. The proof of Theorem \ref{th:Kmn} is given 
in Section \ref{sec:Kmn}.

\begin{theorem}\label{k2cicle} If $p$ is a nonnegative integer and $C_p$ is the cycle of length $p$, then $\alpha\left(F_2(C_p)\right)=\floor{p\floor{p/2}/2}.$
\end{theorem}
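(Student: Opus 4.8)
Throughout we assume $p\ge 3$ (the cases $p\le 2$ are immediate), identify $V(C_p)$ with $\mathbb Z_p$ so that the edges of $C_p$ are the pairs $\{i,i+1\}$, and set $M=\floor{p/2}$. The plan is to study $F_2(C_p)$ through its partition into \emph{distance layers} $L_1,\dots,L_M$, where $L_d$ is the set of $2$-subsets of $\mathbb Z_p$ whose two elements lie at cyclic distance $d$; thus $|L_d|=p$ when $d<p/2$, and $|L_{p/2}|=p/2$ when $p$ is even. Reading off the adjacency rule of $F_2(C_p)$, replacing one element of a distance-$d$ pair by one of its $C_p$-neighbours changes the distance to $d\pm 1$, the only exception being the outermost layer $d=M$ when $p$ is odd, where two of the four such moves leave the distance equal to $M$.

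From this I would extract four structural facts: (i) every edge of $F_2(C_p)$ joins two consecutive layers; (ii) each $L_d$ with $d<M$ is an independent set; (iii) when $p$ is odd, $L_M$ induces a cycle of length $p$ (indexing $L_m$, $m=\floor{p/2}$, by $\mathbb Z_p$ via $i\mapsto\{i,i+m\}$, the induced graph is the circulant on $\mathbb Z_p$ with connection set $\{\pm m\}$, which is a single odd cycle since $\gcd(m,p)=1$), whereas when $p$ is even $L_M$ is independent; and (iv) for $1\le d\le M-1$ with $|L_{d+1}|=p$, the bipartite graph between $L_d$ and $L_{d+1}$ is $2$-regular on both sides, hence a disjoint union of even cycles spanning all $2p$ of its vertices, so any independent set $I$ satisfies $|I\cap L_d|+|I\cap L_{d+1}|\le p$ — and adding the internal edges of $L_M$ when $d+1=M$ (for $p$ odd) only tightens this.

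\emph{Upper bound.} Let $I$ be an independent set and put $a_d=|I\cap L_d|$. If $p$ is even, then by (i) the graph $F_2(C_p)$ is bipartite, with colour classes the even- and the odd-indexed layers; so by K\"onig's theorem $\be(F_2(C_p))=\binom p2-\nu(F_2(C_p))$, and since $C_p$ has a perfect matching we have $\nu(C_p)=p/2=\floor{p/2}$, whence Theorem~\ref{th:match}(2) with $k=2$ gives $\nu(F_2(C_p))\ge(\binom p2-\binom{p/2}{1})/2$ and therefore $\be(F_2(C_p))\le p^2/4=\floor{p\floor{p/2}/2}$. If $p$ is odd, then (iii) gives $a_M\le(p-1)/2=M$ while (iv) gives $a_d+a_{d+1}\le p$ for $1\le d\le M-1$; grouping the layers into consecutive pairs $(L_1,L_2),(L_3,L_4),\dots$, with $L_M$ left over when $M$ is odd, we get $\sum_{d=1}^M a_d\le\frac M2 p$ when $M$ is even and $\sum_{d=1}^M a_d\le\frac{M-1}{2}p+M$ when $M$ is odd, and a one-line computation shows both quantities equal $\floor{pM/2}=\floor{p\floor{p/2}/2}$.

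\emph{Lower bound and main obstacle.} For a matching lower bound I would take $I$ to be the union of the odd-indexed layers $L_1\cup L_3\cup L_5\cup\cdots$, keeping from the last layer only a maximum independent subset of it — this is the whole layer unless that layer is $L_M$ with $p$ odd, in which case it induces a $p$-cycle and contributes $(p-1)/2$ of its $p$ vertices. By (i) and (ii), layers of non-consecutive indices are mutually non-adjacent and layers of index $<M$ are independent, so $I$ is independent, and a count gives $|I|=\frac M2 p$ or $\frac{M-1}{2}p+M$ according as $M$ is even or odd, matching the upper bound. The arithmetic throughout is routine; the real work is the verification of (i)--(iv), above all the two load-bearing facts — that two consecutive full layers induce a $2$-regular bipartite graph (so that $a_d+a_{d+1}\le p$) and that $L_M$ induces a single odd cycle for $p$ odd. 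One small point not to overlook is the pair $(L_{M-1},L_M)$ for $p$ even, where $|L_M|=p/2$ and (iv) does not literally apply; but this case is subsumed by the global bipartite/K\"onig argument and so needs no separate treatment.
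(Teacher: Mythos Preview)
Your proof is correct, but it follows a genuinely different route from the paper's.

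\textbf{Different decompositions.} The paper's sets $L_i$ are \emph{not} distance layers: the paper defines $L_i=\{\{j,p-(i-j)\}:1\le j\le i\}$, which are ``anti-diagonals'' of sizes $1,2,\dots,p-1$, with a more intricate linkage pattern (Observation~\ref{obs-cycle}). Your $L_d$ are the sets of pairs at cyclic distance $d$, each of size $p$ (or $p/2$ for the last layer when $p$ is even). This is a cleaner and more symmetric partition; in particular your consecutive-layer bipartite graphs are all isomorphic to a single $2p$-cycle, which drives the bound $a_d+a_{d+1}\le p$.

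\textbf{Different upper bound for odd $p$.} The paper obtains $\be(F_2(C_p))\le pt/2$ from the general recursive inequality of Corollary~\ref{p:indep} together with the known value $\be(P_{p-1})=\lceil(p-1)/2\rceil$ from Corollary~\ref{k2-path-kmm}. Your argument is local and self-contained: pair off consecutive distance layers, use $2$-regularity to bound each pair by $p$, and bound the leftover layer $L_M$ (when $M$ is odd) by $(p-1)/2$ via the induced $C_p$. This avoids both the double-counting lemma and the path result.

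\textbf{Even $p$.} The paper dispatches this case by appeal to the machinery of Section~\ref{indep-bipar} (essentially Theorem~\ref{super: per-almost}); you instead invoke K\"onig--Gallai plus Theorem~\ref{th:match}(2) directly, which amounts to the same thing unwound.

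\textbf{One stylistic nit.} Your item (i), ``every edge of $F_2(C_p)$ joins two consecutive layers,'' is false as stated for odd $p$ because of the internal edges of $L_M$; you acknowledge the exception in the preceding sentence and again in (iii), so the argument is unaffected, but (i) should be phrased with the caveat.

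In short: both proofs are valid. The paper's approach showcases the general tools developed earlier (Lemma~\ref{th:independent}, Corollary~\ref{k2-path-kmm}); yours is more elementary and geometric, needing only the single matching bound of Theorem~\ref{th:match}(2) from elsewhere in the paper.
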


This formula for $\alpha(F_2(C_p))_{p\geq 3}$ produces the sequence A189889 in OEIS \cite{oeis}, which counts the maximum number of 
non-attacking kings on an $p \times p$ toroidal board (see, e.g., \cite[Theorem 11.1, p. 194]{wat}). In Section \ref{sec:C_p} we show Theorem \ref{k2cicle}.  

%%%%%%%%%%%%%%%%%%%%%%%%%%%%
%%%%%%%%%%%%%%%%%%%%%%%%%%%%
%%%%%%%%%%%%%%%%%%%%%%%%%

\section{Proof of Theorem \ref{th:match}}\label{match}

Since $\nu(G)= \lfloor n /2\rfloor$, then $G$ has a matching $M$ with edges $a_1b_1, a_2b_2, \dots, a_mb_m$, where $m=\lfloor n/2 \rfloor$.
We analyze the two cases separately.\\
{\it Proof of (1).}  Let $X=\{x_1, \dots, x_k\}$ be a vertex of $F_k(G)$. Since $n$ is even and $k$ is odd, then there exists at least one subscript $j\in \{1,\ldots ,k\}$ such that $X$ contains precisely one of $a_j$ or $b_j$. Let $i$ be the smallest of such subscripts, and  let  
$X'=X \Delta \{a_i, b_i\}$. Then $X$ and $X'$ are adjacent in $F_k(G)$. Moreover, because the way in which $X'$ was obtained from $X$, it is
not hard to see that  the set of edges $\{[X, X']~:~X \in V(F_k(G))\}$ is a matching in $F_k(G)$ with exactly $\binom{n}{k}/2$ edges, as required.\\

{\it Proof of (2).} Let $q=\lfloor k/2 \rfloor$. We will show that $F_k(G)$ always contains an independent vertex set $I$ with $|I|=\binom{m}{q}$, and such that the subgraph of  $F_k(G)$ that results by deleting the vertices of $I$ has a perfect matching. This clearly implies the required inequality.    

If $k$ is even, then the set $I_0$ of vertices in $F_k(G)$ of the form $\{a_{j_1},b_{j_1}, \dots a_{j_q}, b_{j_q}\}\subseteq V(M)$ is an independent set in  $F_k(G)$ with exactly $\binom{m}{q}$ vertices. Similarly, if $k$ is odd (and hence $n$ odd due to previous case), then the set $I_1$ of vertices in $F_k(G)$ of the form $\{a_{j_1},b_{j_1}, \dots a_{j_q}, b_{j_q}, v\}\subseteq V(M) \cup \{v\}$, where $v$ is the vertex in $V(G)\setminus V(M)$, is an independent set with the desired number of vertices. Let $I=I_1$ if $k$ is odd, and let  $I=I_0$ otherwise. By applying an analogous procedure to the one used in the proof of part (1) to the vertices of $F_k(G)-I$ we can get the required perfect matching of $F_k(G)-I$. 

Finally, if $G$ is a perfect matching (resp., almost perfect matching) then $I_0$ (resp., $I_1$) is a set of isolated vertices and the final part of the theorem follows.   
$\square$ 

The converse of Theorem~\ref{th:match} (1) is false in general. For example, $F_3(K_{1, 5})$ has a perfect matching (the set of red edges in  Figure~\ref{f2star13}) but $K_{1, 5}$ does not have it.  

\begin{figure}[ht] 
\begin{center}
\epsfig{file=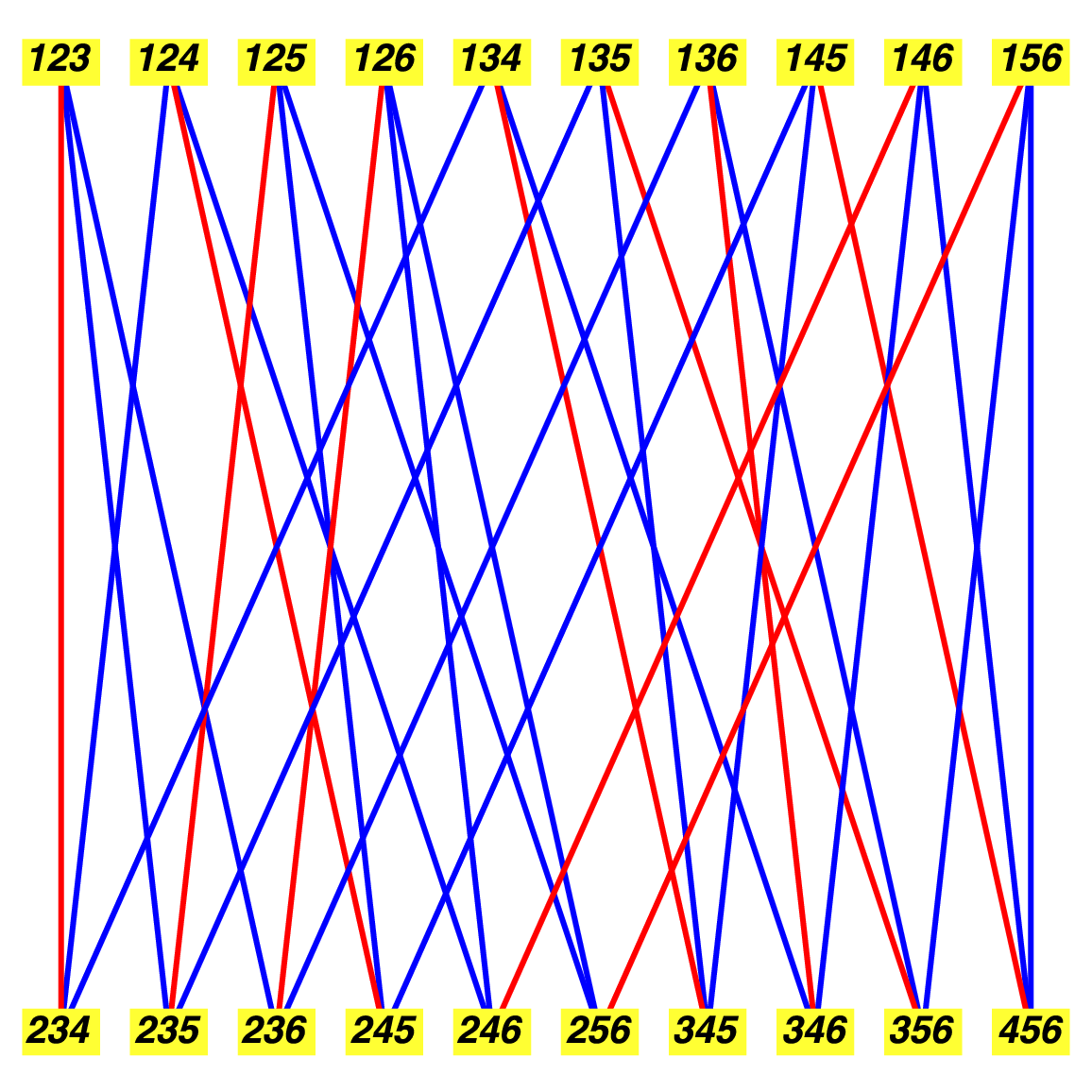, scale=0.3} 
\caption{\small A perfect matching (the red edges) in the $3$-token graph of $K_{1, 5}$.}
\label{f2star13}
\end{center}
\end{figure}

Next corollary states that $2\nu(F_k(G)) \rightarrow |V(F_k(G))|$ when $|G|\rightarrow \infty$.

\begin{corollary}\label{cor:match-even}
Let $G$ be a graph of order $n$. If $\nu(G)= \lfloor n /2\rfloor$, then 
\[
\frac{2\nu(F_k(G))}{\binom{n}{k}}\ge \frac{\binom{n}{k}-\binom{\lfloor n/2\rfloor}{\lfloor k/2\rfloor}}{\binom{n}{k}}\ge 1 -\left(\frac{k}{n}\right)^{ \lfloor k/2 \rfloor}.
\] 
\end{corollary}

%%%%%%%%%%%%%%%%%%%%%%%%%%%%
%%%%%%%%%%%%%%%%%%%%%%%%%%%%
%%%%%%%%%%%%%%%%%%%%%%%%%%%%

\section{Estimation of $\alpha(F_k(G))$ for $G$ bipartite}\label{indep-bipar}

This section is devoted to the study of the independence number of the $k$-token graphs of some common bipartite graphs. As we will see, most of the results stated 
in this section will be used, directly or indirectly, in the proof of Theorem \ref{th:Kmn}. 

%%%%%%%%%%%%%%%%%%%%%%%%%%%%
%%%%%%%%%%%%%%%%%%%%%%%%%%%%
%%%%%%%%%%%%%%%%%%%%%%%%%%%%
%%%%%%%%%%%%%%%%%%%%%%%%%%%%
%%%%%%%%%%%%%%%%%%%%%%%%%%%%
%%%%%%%%%%%%%%%%%%%%%%%%%%%%

\subsection{Notation and auxiliary results}\label{sec:notation}

Let $G$ be a bipartite graph with bipartition $\{B,R\}$. Let $m:=|B|\geq 1, n:=|R|\geq 1$, and let  $k\in \{1,\ldots ,m+n-1\}$. Let  

$$\mathcal R:=\{A\subset V(G) \colon  |A|=k, |R \cap A| \text{ is odd}\},$$
 and let 
$$\mathcal B:= \{A\subset V(G) \colon  |A|=k, |R \cap A| \text{ is even}\}.$$

From Proposition 12 in \cite{FFHH} we know that $F_k(G)$ is a bipartite graph. It is not difficult to check that $\{\mathcal R, \mathcal B\}$ is a bipartition of $F_k(G)$. Without loss of  generality we can assume that $m\leq n$. 

\begin{remark}\label{rem:notation}
Unless otherwise stated, from now on we will assume that 
$G, B, R, \mathcal B,$ $\mathcal R$, $m, n$ and $k$ are as above. 
\end{remark}

Recall that a {\it matching of $B$ into $R$} is a matching $M$ in $G$ such that every vertex in $B$ is incident with an edge in $M$ \cite{asra}. Now we recall the classical Hall's Theorem.

\begin{theorem}\label{hall}
The bipartite graph $G$ has a matching of $B$ into $R$ if and only if $|N(S)|\geq |S|$ for every $S \subseteq B$.
\end{theorem}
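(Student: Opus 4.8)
The plan is to prove the two implications separately: the forward direction is immediate, and the reverse direction goes by induction on $m=|B|$. For necessity, suppose $M$ is a matching of $B$ into $R$. Given $S\subseteq B$, the edges of $M$ incident with $S$ send the vertices of $S$ to $|S|$ distinct vertices of $R$, all of which lie in $N(S)$; hence $|N(S)|\geq |S|$.

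For sufficiency, assume the Hall condition $|N(S)|\geq|S|$ holds for all $S\subseteq B$ and induct on $m$. The base case $m=1$ is clear, since $|N(B)|\geq 1$ provides a vertex of $R$ adjacent to the unique vertex of $B$. For the inductive step I would split into two cases according to whether the Hall inequality is ever tight on a nonempty proper subset of $B$. \emph{Case 1:} $|N(S)|\geq |S|+1$ for every nonempty $S\subsetneq B$. Fix any edge $[b,r]$ of $G$ (one exists since $|N(B)|\geq m\geq 1$) and set $G':=G-b-r$ with parts $B':=B\setminus\{b\}$ and $R':=R\setminus\{r\}$. For every $T\subseteq B'$ we have $|N_{G'}(T)|\geq |N_G(T)|-1\geq |T|$, so the Hall condition holds in $G'$; by induction $G'$ has a matching of $B'$ into $R'$, and adjoining $[b,r]$ yields a matching of $B$ into $R$.

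\emph{Case 2:} there is a nonempty $S\subsetneq B$ with $|N(S)|=|S|$. Apply the inductive hypothesis to the subgraph $G_1$ induced by $S\cup N(S)$ (here the part $S$ has size $<m$, and the Hall condition for subsets of $S$ is inherited from $G$) to obtain a matching $M_1$ of $S$ into $N(S)$. Now let $G_2:=G-S-N(S)$, with parts $B\setminus S$ and $R\setminus N(S)$. For $T\subseteq B\setminus S$ one checks that $N_{G_2}(T)=N_G(S\cup T)\setminus N(S)$, so $|N_{G_2}(T)|=|N_G(S\cup T)|-|N(S)|\geq |S\cup T|-|S|=|T|$; thus the Hall condition holds in $G_2$, and since $|B\setminus S|<m$ induction gives a matching $M_2$ of $B\setminus S$ into $R\setminus N(S)$. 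Then $M_1\cup M_2$ is the required matching of $B$ into $R$.

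The main point requiring care is the verification in Case 2 that the Hall condition descends to $G_2$: this is precisely where the equality $|N(S)|=|S|$ is exploited, and it is what forces the two-case split, since in Case 1 no such tight set is available and one must instead delete a single edge. An alternative route would be the augmenting-path argument — repeatedly enlarging a matching until it saturates $B$, and arguing that the inability to augment exposes a subset $S$ with $|N(S)|<|S|$ — but the inductive proof above is self-contained and avoids developing alternating-path machinery.
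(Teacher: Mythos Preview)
Your proof is correct and is the standard inductive argument for Hall's Theorem. However, the paper does not actually prove this statement: it merely recalls it as ``the classical Hall's Theorem'' (with a reference to a textbook) and uses it as a black box in subsequent arguments. So there is no paper proof to compare against; you have supplied a complete and self-contained proof where the paper chose to cite the result.
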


\begin{lemma}\label{equal-part}
If there exists a matching of $B$ into $R$, then $\alpha(G)=|R|$.
\end{lemma}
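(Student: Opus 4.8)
\textbf{Proof plan for Lemma~\ref{equal-part}.}
The plan is to establish the two inequalities $\be(G)\le |R|$ and $\be(G)\ge |R|$ separately. The upper bound is the real content; the lower bound is immediate because $R$ itself is an independent set in the bipartite graph $G$ (both parts of a bipartition are independent sets), so $\be(G)\ge |R|$.

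For the upper bound, I would argue that no independent set can have more than $|R|$ vertices. Let $I$ be any independent set of $G$, and write $I=I_B\cup I_R$ where $I_B:=I\cap B$ and $I_R:=I\cap R$. Let $M$ be a matching of $B$ into $R$, which exists by hypothesis; by Hall's Theorem (Theorem~\ref{hall}) such a matching saturates every vertex of $B$. Consider the restriction of $M$ to the edges incident with vertices of $I_B$: since $M$ saturates all of $B$, every vertex of $I_B$ is the endpoint of a unique edge of $M$, and these edges have distinct endpoints in $R$ because $M$ is a matching. This exhibits an injection $\varphi\colon I_B\to R$ sending each $b\in I_B$ to its $M$-partner $\varphi(b)\in R$. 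The key point is that $\varphi(b)\notin I_R$ for every $b\in I_B$: the edge $[b,\varphi(b)]$ lies in $M\subseteq E(G)$, so if $\varphi(b)$ were in $I_R\subseteq I$ then $I$ would contain both endpoints of an edge, contradicting independence. Hence $\varphi$ is an injection from $I_B$ into $R\setminus I_R$, giving $|I_B|\le |R|-|I_R|$, i.e. $|I|=|I_B|+|I_R|\le |R|$. Taking $I$ to be a maximum independent set yields $\be(G)\le|R|$, and combined with the lower bound we conclude $\be(G)=|R|$.

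The only mild subtlety — and the step I would be most careful about — is the bookkeeping that $\varphi$ lands in $R\setminus I_R$ rather than merely in $R$; everything else is routine once one invokes Hall's Theorem to guarantee that the matching saturates $B$. No further case analysis is needed, and the argument does not use the specific structure of token graphs, only that $G$ is bipartite with the stated matching property.
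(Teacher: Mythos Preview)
Your proof is correct and follows essentially the same approach as the paper: both split an arbitrary independent set into its $B$-part and $R$-part, use the given matching to inject the $B$-part into $R$, and observe that the image is disjoint from the $R$-part by independence. One small remark: you do not need to invoke Hall's Theorem at all---the hypothesis already hands you a matching of $B$ into $R$, and by the paper's definition such a matching saturates $B$; Hall's Theorem is only the converse direction (a criterion for such a matching to exist).
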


\begin{proof}
Since $G$ contains a matching $M$ of $B$ into $R$, it follows that $|R|\geq |B|$. Then $\alpha(G)\geq |R|$, because $R$ is an independent set of $G$. 

Now we show that $\alpha(G)\leq |R|$.  Let $X$ be any independent set of $G$. If $X \subseteq B$ or $X \subseteq R$ we are done. So we may assume  that $B':=X\cap B\not=\emptyset$ and  that $R':=X\cap R\not=\emptyset$. Let $M'$ be the set of edges in $M$  that have one endvertex in $B'$, and let $R''$ be the set of endvertices of $M'$ in $R$.  
Thus $V(M')=B'\cup R''$, and $|B'|=|R''|$. Since $X$ is an independent set, then $R'\cap R''=\emptyset$, and hence $R'\cup R''$ is also an independent set of $G$ with  
$|R'\cup R''|=|X|.$
\end{proof}

\begin{proposition}\label{prop:bipartite}
Let $G, B, R, \mathcal B, \mathcal R, m, n$ and $k$ as in Remark \ref{rem:notation}. Then     
\[
 \alpha\big(F_k(G)\big) \ge \max\left\{r, \binom{n+m}{k}-r \right\},
\]
where 
\[
r=\sum_{i=1}^{\lceil k/2\rceil} \binom{n}{2i-1}\binom{m}{k-2i+1}.
\]

\end{proposition}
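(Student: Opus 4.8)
\textbf{Proof proposal for Proposition~\ref{prop:bipartite}.}

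The plan is to exhibit two explicit independent sets of $F_k(G)$, one of size $r$ and one of size $\binom{n+m}{k}-r$, and then take the larger. The natural candidates are the two color classes of a suitable bipartition-refinement. Recall that $\{\mathcal R,\mathcal B\}$ already partitions $V(F_k(G))$ into two independent sets, so the whole game is to choose the ``right'' two-coloring; the claimed quantity $r=\sum_{i=1}^{\lceil k/2\rceil}\binom{n}{2i-1}\binom{m}{k-2i+1}$ is exactly the number of $k$-subsets $A$ of $V(G)$ with $|R\cap A|$ odd, i.e.\ $r=|\mathcal R|$, so in fact $\mathcal R$ and $\mathcal B$ themselves already do the job. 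First I would verify that identity: a $k$-subset $A$ with $|R\cap A|=2i-1$ (the odd values $1,3,\dots$) is obtained by choosing $2i-1$ of the $n$ vertices of $R$ and the remaining $k-(2i-1)$ from the $m$ vertices of $B$, and summing over $i$ from $1$ to $\lceil k/2\rceil$ covers all admissible odd sizes; hence $|\mathcal R|=r$ and $|\mathcal B|=\binom{n+m}{k}-r$.

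With that identity in hand, the proposition is almost immediate: since $F_k(G)$ is bipartite with parts $\mathcal R$ and $\mathcal B$ (Proposition~12 of~\cite{FFHH} together with the check already noted in the text), each of $\mathcal R$ and $\mathcal B$ is an independent set of $F_k(G)$, so $\be(F_k(G))\ge\max\{|\mathcal R|,|\mathcal B|\}=\max\{r,\binom{n+m}{k}-r\}$. I would spell out explicitly why $\mathcal R$ is independent: if $A,A'\in\mathcal R$ are adjacent in $F_k(G)$, then $A\triangle A'=\{b,s\}$ is an edge of $G$, hence (as $G$ is bipartite with parts $B,R$) exactly one of $b,s$ lies in $R$; thus $|R\cap A|$ and $|R\cap A'|$ differ by exactly one in parity, contradicting that both are odd. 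The same argument handles $\mathcal B$.

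So the ``proof'' is really just the combinatorial bookkeeping of $|\mathcal R|$, plus invoking bipartiteness of $F_k(G)$. I expect the only mildly delicate point to be getting the summation index range and the parity bookkeeping exactly right (making sure $\lceil k/2\rceil$ is the correct upper limit when $k$ is even versus odd, and that we are not double-counting or missing the case $2i-1>k$), but this is routine. No Hall-type argument or the preceding lemmas are actually needed here — those are presumably used later for the complementary upper bounds on specific families — so the main obstacle is essentially notational rather than mathematical.
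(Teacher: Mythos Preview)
Your proposal is correct and matches the paper's own proof essentially line for line: the paper also partitions $\mathcal R$ according to the value $|R\cap A|=2i-1$, counts each piece as $\binom{n}{2i-1}\binom{m}{k-2i+1}$, and concludes $|\mathcal R|=r$, $|\mathcal B|=\binom{n+m}{k}-r$. Your additional explicit parity check that $\mathcal R$ (and $\mathcal B$) is independent is a harmless elaboration of what the paper takes for granted from its earlier remark that $\{\mathcal R,\mathcal B\}$ is a bipartition of $F_k(G)$.
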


\begin{proof}
For $i=1,\ldots,{\lceil k/2\rceil}$, let $\RR_i$ be the subset of $\RR$ defined by $$\RR_i:=\{A\subset V(G) \colon  |A|=k, |R \cap A|=2i-1\}.$$ Since $|\RR_i|=\binom{n}{2i-1}\binom{m}{k-2i+1}$ the desired result it follows by observing that
 $|\mathcal R|=r$ and $|\mathcal B|= \binom{n+m}{k}-r$.
\end{proof}

The bound for $\alpha(F_k(G))$ given in Proposition~\ref{prop:bipartite} is not always attained: for instance, it is not difficult to see that the graph $G$ in Figure~\ref{g-cont} has $\alpha(F_2(G))=12$ and 
$\max\{|\RR|, |\BB|\}=11$. Note that $F_2(G)$,  shown in Figure~\ref{token-cont}, does not satisfy Hall's condition for $\AA=\{13, 14, 15, 16, 17, 23\}$, i.e., 
 $|N(\AA)| <|\AA|$. 
 
\begin{figure}[ht] % 
\hfill
\begin{minipage}[t]{.49\textwidth}
\begin{center}
\epsfig{file=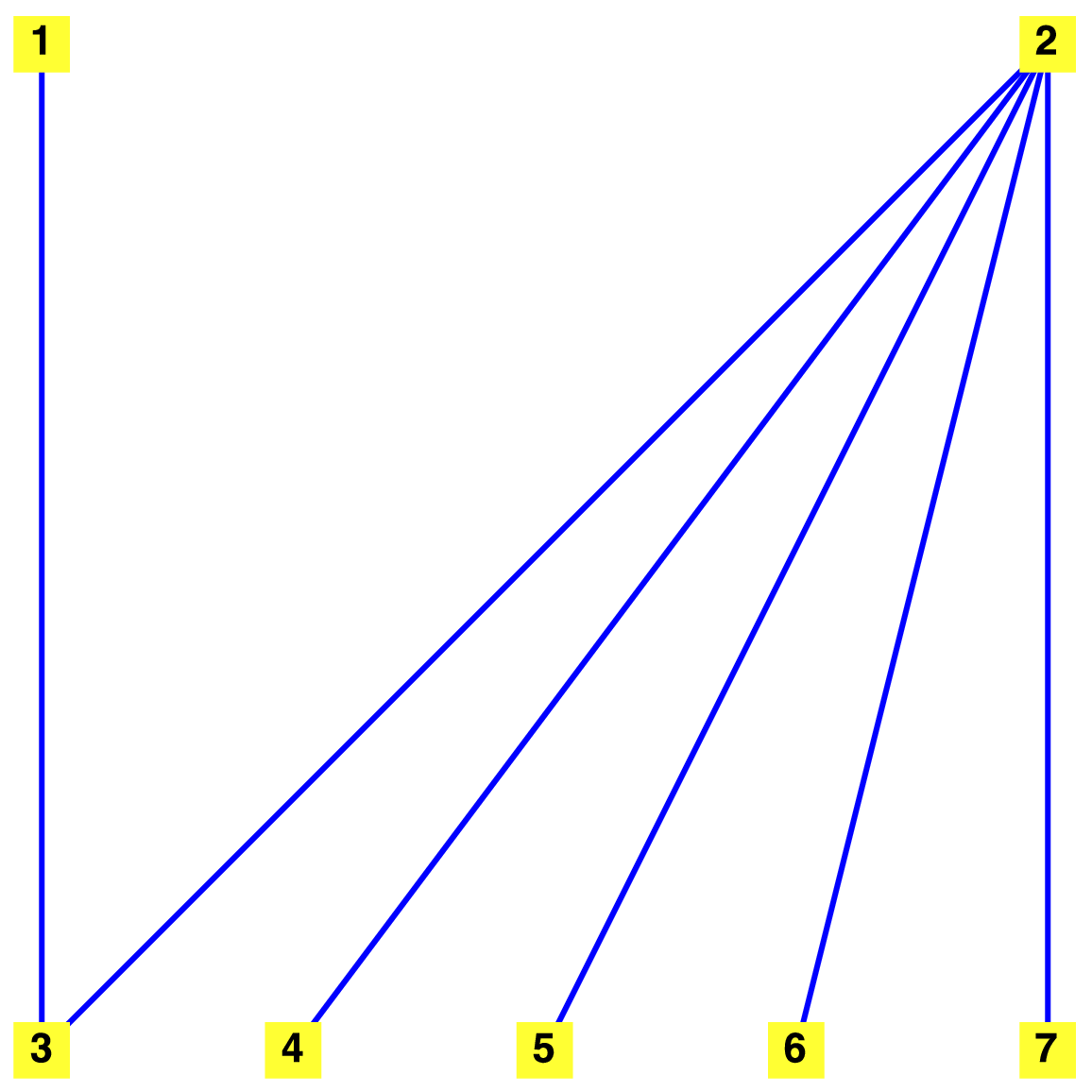, scale=0.27} 
\caption{A bipartite graph $G$ with bipartition $\{B,R\}$, and 
$|B|=2, |R|=5$.}
\label{g-cont}
\end{center}
\end{minipage}
\hfill
\begin{minipage}[t]{.49\textwidth}
\begin{center}
\epsfig{file=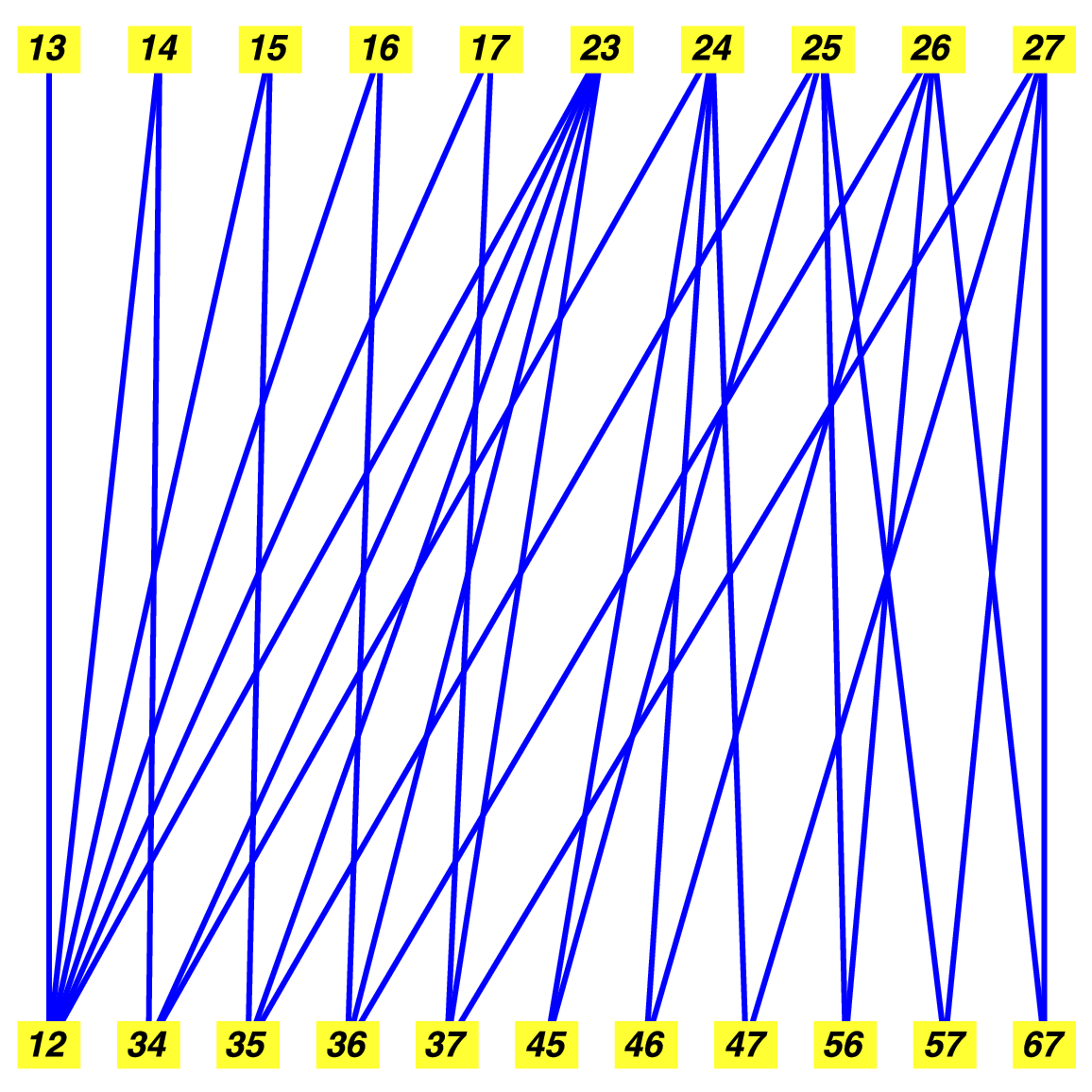, scale=0.27} 
\caption{This is $F_2(G)$ for the graph $G$ on the left. Note that  $\{13, 14, 15, 16, 17, 23, 45, 46, 47, 56, 57, 67\}$  
is an independent set.}
\label{token-cont}
\end{center}
\end{minipage}
\hfill
\end{figure} 
\begin{proposition}\label{prop:sizes}
If $k=2$, then $|\BB|\geq |\RR|$ if and only if $n-m \geq \frac{1+\sqrt{1+8m}}{2}$. 
\end{proposition}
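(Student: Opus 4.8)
The plan is to evaluate both sides of the inequality explicitly and reduce it to an elementary quadratic inequality. Since $k=2$, a $2$-subset $A$ of $V(G)$ satisfies ``$|R\cap A|$ odd'' precisely when $A$ consists of exactly one vertex of $R$ and one vertex of $B$; hence $|\RR|=mn$. Likewise, $|R\cap A|$ is even precisely when $A\subseteq R$ or $A\subseteq B$, so $|\BB|=\binom{n}{2}+\binom{m}{2}$. As a consistency check, $|\RR|+|\BB|=mn+\binom{n}{2}+\binom{m}{2}=\binom{n+m}{2}$, exactly the number of vertices of $F_2(G)$.

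Next I would rewrite the condition $|\BB|\ge|\RR|$. Multiplying $\binom{n}{2}+\binom{m}{2}\ge mn$ by $2$ and rearranging gives $n^{2}-n+m^{2}-m\ge 2mn$, i.e., using the identity $n^{2}-2mn+m^{2}=(n-m)^{2}$, the equivalent inequality
\[
(n-m)^{2}\ge n+m .
\]
Putting $d:=n-m$ (which is nonnegative because of the standing assumption $m\le n$) and writing $n+m=d+2m$, this becomes $d^{2}-d-2m\ge 0$.

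Finally I would solve this quadratic inequality in $d$. The polynomial $x^{2}-x-2m$ has roots $\tfrac{1\pm\sqrt{1+8m}}{2}$; since $m\ge 1$, the smaller root $\tfrac{1-\sqrt{1+8m}}{2}$ is strictly negative while $d\ge 0$. Therefore, for $d\ge 0$ we have $d^{2}-d-2m\ge 0$ if and only if $d\ge \tfrac{1+\sqrt{1+8m}}{2}$, that is, $n-m\ge \tfrac{1+\sqrt{1+8m}}{2}$, which is the claimed equivalence. There is essentially no obstacle in this argument; the only two points that merit a word of care are the sign bookkeeping in passing to $(n-m)^{2}\ge n+m$, and the observation that the hypothesis $m\le n$ forces $d\ge 0$, so that only the larger root of the quadratic is relevant.
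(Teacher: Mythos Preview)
Your proof is correct and follows essentially the same route as the paper: both compute $|\RR|=mn$ and $|\BB|=\binom{m+n}{2}-mn$, reduce $|\BB|\ge|\RR|$ to the quadratic inequality $(n-m)^2-(n-m)-2m\ge 0$, and then solve for $n-m\ge 0$. The only cosmetic difference is that the paper introduces $s:=n-m$ at the outset and writes $|\BB|-|\RR|=\binom{s}{2}-m$ directly, whereas you pass through the intermediate form $(n-m)^2\ge n+m$; the arguments are otherwise identical.
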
 

\begin{proof} From our assumption that $k=2$ and 
Proposition~\ref{prop:bipartite} it follows that $F_2(G)$ has bipartition 
$\{\RR, \BB\}$ with $|\RR|=(m+s)m$ and 
$|\BB|=\binom{2m+s}{2}-(m+s)m$, where $s:=n-m$. Thus $|\BB|-|\RR|= \binom{s}{2}-m$. This equality implies that $|\BB|\geq |\RR|$ if and only if $s^2-s-2m\geq 0$.  
The result follows by solving the last inequality for $s$, and considering that $s\geq 0$.
\end{proof} 

%%%%%%%%%%%%%%%%%%%%%%%%%%%%%%%%
%%%%%%%%%%%%%%%%%%%%%%%%%%%%%%%%
%%%%%%%%%%%%%%%%%%%%%%%%%%%%%%%%
%%%%%%%%%%%%%%%%%%%%%%%%%%%%%%%%
%%%%%%%%%%%%%%%%%%%%%%%%%%%%%%%%
%%%%%%%%%%%%%%%%%%%%%%%%%%%%%%%%

\subsection{Exact values for $\alpha(F_k(G))$ for some bipartite $G$}\label{sec:exact}

Our aim in this subsection is to determine the exact independence 
number of the $k$-token graphs of some common bipartite graphs. 

Next result is a consequence of Theorem~\ref{th:match} (1) (see also \cite{volk}).

\begin{theorem}\label{indep-over-2}
Let $G$ be a bipartite graph. If $G$ has a perfect matching and $k$ is odd, then $\alpha(F_k(G))=\binom{m+n}{k}/2$.
\end{theorem}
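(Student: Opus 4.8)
The statement to prove is Theorem~\ref{indep-over-2}: if $G$ has a perfect matching and $k$ is odd, then $\be(F_k(G)) = \binom{m+n}{k}/2$. The plan is to combine the lower bound on the matching number $\nu(F_k(G))$ coming from Theorem~\ref{th:match}~(1) with a general inequality relating the independence number and the matching number in a bipartite graph. Since $G$ has a perfect matching, its order $n_G := m+n$ is even, and $\nu(G) = (m+n)/2 = \lfloor (m+n)/2 \rfloor$, so the hypotheses of Theorem~\ref{th:match} are met; with $k$ odd, part~(1) gives $\nu(F_k(G)) = \binom{m+n}{k}/2$. In other words, $F_k(G)$ has a perfect matching, so every maximum matching saturates all $\binom{m+n}{k}$ vertices.

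First I would recall that $F_k(G)$ is bipartite (Proposition~12 of \cite{FFHH}, already invoked in the excerpt via the bipartition $\{\RR,\BB\}$). For a bipartite graph $H$ on $N$ vertices, K\"onig's theorem gives that the minimum vertex cover has size $\nu(H)$, and Gallai's identity gives that the maximum independent set has size $N - \tau(H)$ where $\tau(H)$ is the minimum vertex cover number; hence $\be(H) = N - \nu(H)$. This is precisely Corollary~1.3 of \cite{volk} cited in the excerpt. Applying this with $H = F_k(G)$, $N = \binom{m+n}{k}$, and $\nu(F_k(G)) = \binom{m+n}{k}/2$ yields $\be(F_k(G)) = \binom{m+n}{k} - \binom{m+n}{k}/2 = \binom{m+n}{k}/2$, which is the claim.

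So the proof is essentially a two-line assembly: verify the hypotheses of Theorem~\ref{th:match}~(1) (using that a perfect matching forces $m+n$ even and $\nu(G)$ maximum), quote part~(1) to get that $F_k(G)$ has a perfect matching, then quote the bipartite identity $\be = |V| - \nu$ (from K\"onig--Gallai, i.e.\ \cite[Corollary~1.3]{volk}) to convert the matching number into the independence number. One small point worth stating explicitly is why $\binom{m+n}{k}$ is even when $m+n$ is even and $k$ is odd, so that $\binom{m+n}{k}/2$ is genuinely an integer and the "perfect matching" conclusion of Theorem~\ref{th:match}~(1) is consistent; this follows since $\binom{m+n}{k} = \frac{m+n}{k}\binom{m+n-1}{k-1}$ and $k \mid$ ... — but in fact it is cleanest just to note that $F_k(G)$ having a perfect matching (as asserted by Theorem~\ref{th:match}~(1)) already forces $|V(F_k(G))|$ to be even, so no separate parity check is needed.

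\textbf{Main obstacle.} There is essentially no obstacle here: the theorem is a direct corollary of results already established or cited. The only thing to be careful about is making sure the quoted form of \cite[Corollary~1.3]{volk} is exactly "in a bipartite graph, the independence number equals the number of vertices minus the matching number" (equivalently, the complement of a minimum vertex cover is a maximum independent set, and in bipartite graphs minimum vertex cover $=$ maximum matching by K\"onig); if the cited corollary is stated slightly differently, one would spell out the one-line K\"onig--Gallai derivation. I would write the proof as: "Since $G$ has a perfect matching, $m+n$ is even and $\nu(G)=\lfloor(m+n)/2\rfloor$; as $k$ is odd, Theorem~\ref{th:match}~(1) gives $\nu(F_k(G))=\binom{m+n}{k}/2$, i.e.\ $F_k(G)$ has a perfect matching. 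Since $F_k(G)$ is bipartite, \cite[Corollary~1.3]{volk} yields $\be(F_k(G)) = \binom{m+n}{k} - \nu(F_k(G)) = \binom{m+n}{k}/2$."
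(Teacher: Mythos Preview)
Your proposal is correct and follows essentially the same approach as the paper: apply Theorem~\ref{th:match}~(1) to obtain a perfect matching in $F_k(G)$, then use bipartiteness together with the K\"onig--Gallai identity (cited as \cite[Corollary~1.3]{volk}) to conclude $\be(F_k(G)) = |V(F_k(G))| - \nu(F_k(G)) = \binom{m+n}{k}/2$. The paper's proof is exactly this two-step argument, stated in two sentences.
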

\begin{proof}
From Theorem~\ref{th:match} (1) it follows that $F_k(G)$ has a perfect matching. This and the fact that $F_k(G)$ is a bipartite graph imply that $\alpha(F_k(G))=\binom{m+n}{k}/2$.
\end{proof}

\begin{corollary}\label{cor:pn-kmm} For $G\in \{P_{2n},C_{2n},K_{n, n}\}$ and $k$ odd, $\alpha(F_k(G))=\binom{2n}{k}/2$.
\end{corollary}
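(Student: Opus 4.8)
The plan is to obtain this corollary as an immediate consequence of Theorem~\ref{indep-over-2}. Each of the graphs $P_{2n}$, $C_{2n}$ and $K_{n,n}$ is bipartite and has order $2n$, so it falls under the framework of Remark~\ref{rem:notation} with $m+n=2n$. Hence it suffices to verify that each of these three graphs admits a perfect matching; once this is done, Theorem~\ref{indep-over-2} applies (its hypotheses being precisely ``$G$ has a perfect matching'' and ``$k$ is odd''), and yields $\be(F_k(G))=\binom{2n}{k}/2$ for every odd $k$ with $1\le k\le 2n-1$.

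I would then exhibit the perfect matchings explicitly. For the path $P_{2n}$ with its vertices listed consecutively as $u_1,\dots,u_{2n}$, the edge set $\{[u_1,u_2],[u_3,u_4],\dots,[u_{2n-1},u_{2n}]\}$ is a perfect matching. For the even cycle $C_{2n}$, taking alternate edges around the cycle again gives a perfect matching of size $n$. For $K_{n,n}$ with bipartition $\{x_1,\dots,x_n\}$ and $\{y_1,\dots,y_n\}$, the set $\{[x_i,y_i]\colon 1\le i\le n\}$ is a perfect matching (one may also invoke Hall's condition, Theorem~\ref{hall}, which holds trivially for $K_{n,n}$).

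There is no real obstacle in this argument; the only point requiring a moment of care is to note that the quantity $m+n$ appearing in Theorem~\ref{indep-over-2} is the order of the underlying bipartite graph, which equals $2n$ in all three cases, so that the common bound $\binom{2n}{k}/2$ results.
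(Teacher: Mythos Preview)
Your proposal is correct and follows exactly the intended route: the paper states Corollary~\ref{cor:pn-kmm} immediately after Theorem~\ref{indep-over-2} without further proof, so the whole content is precisely the observation that $P_{2n}$, $C_{2n}$ and $K_{n,n}$ are bipartite graphs of order $2n$ with a perfect matching. Your explicit exhibition of the matchings and the remark reconciling the two uses of the letter $n$ are helpful clarifications but add nothing beyond what the paper implicitly assumes.
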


We noted that for $0\leq m < n$, $T(n, m):=\binom{2n}{2m+1}/2$ is a formula for the sequence A091044 in the ``On-line Encyclopedia of Integer Sequences" (OEIS) \cite{oeis}, and so Corollary \ref{cor:pn-kmm} provides a new interpretation for such a sequence.

As we will see, most of the results in the rest of the section exhibit families of graphs for which the bound for $\alpha(F_k(G))$ 
given in Proposition \ref{prop:bipartite} is attained.

\begin{proposition}\label{prop:k1n} Let $G=K_{m,n}$, with $m=1$ and $n\geq 1$ (i.e., 
 $G$ is the star of order $n+1$). Then

\[
\alpha(F_k(K_{1,n})) = \left\{
  \begin{array}{lr}
  \binom{n}{k} &  k \leq (n+1)/2,\\
    \binom{n}{k-1} &  k > (n+1)/2.
    
  \end{array}
\right.
\]

\end{proposition}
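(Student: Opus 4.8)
The plan is to identify explicitly the components of $F_k(K_{1,n})$ and then compute the independence number using the structure of its blocks, appealing where possible to the bipartite machinery already developed (Proposition~\ref{prop:bipartite} and Lemma~\ref{equal-part}). Write $V(K_{1,n}) = \{c\}\cup R$, where $c$ is the center and $R=\{r_1,\dots,r_n\}$ is the set of leaves, so here $m=1$. A $k$-subset $A$ of $V(K_{1,n})$ is adjacent to $A'$ precisely when their symmetric difference is an edge $[c,r_i]$; in particular every edge of $F_k(K_{1,n})$ goes between a set containing $c$ and a set not containing $c$. So partition $V(F_k(K_{1,n}))$ into $\mathcal{X} := \{A : c\in A\}$ (the sets of the form $\{c\}\cup S$ with $S\in\binom{R}{k-1}$) and $\mathcal{Y} := \{A : c\notin A\}$ (the sets $S\in\binom{R}{k}$). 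Then $|\mathcal{X}| = \binom{n}{k-1}$ and $|\mathcal{Y}| = \binom{n}{k}$, and $\{\mathcal X,\mathcal Y\}$ is exactly the bipartition $\{\mathcal R,\mathcal B\}$ (or its reverse) from Remark~\ref{rem:notation}.

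The first real step is to describe the adjacency precisely: $\{c\}\cup S$ (with $S\in\binom{R}{k-1}$) is adjacent to $T\in\binom{R}{k}$ iff $T = S\cup\{r_i\}$ for some $r_i\notin S$, i.e.\ iff $S\subset T$. Thus $F_k(K_{1,n})$ is isomorphic to the bipartite "containment" graph between $\binom{R}{k-1}$ and $\binom{R}{k}$ — equivalently, to (one bipartite layer of) the Boolean lattice, which is the bipartite graph whose biadjacency matrix is the inclusion matrix $W_{k-1,k}$. This graph is connected (for $1\le k\le n$), so it has a single nontrivial component together with possibly isolated vertices; but in fact no vertex is isolated when $1\le k\le n-1$ and $k\ge 1$ here, so it is connected. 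Now I apply Lemma~\ref{equal-part}: the containment graph has a matching saturating the smaller side. Indeed, if $k-1 \le k$ trivially, the relevant question is whether $\binom{R}{k-1}$ can be matched into $\binom{R}{k}$ (when $\binom{n}{k-1}\le\binom{n}{k}$, i.e.\ $k\le (n+1)/2$) or $\binom{R}{k}$ can be matched into $\binom{R}{k-1}$ (when $k > (n+1)/2$). Both statements are classical consequences of Hall's theorem applied to the middle layers of the Boolean lattice — a standard counting argument shows the inclusion graph between $\binom{R}{j}$ and $\binom{R}{j+1}$ satisfies Hall's condition from the smaller side, since it is biregular-ish (each $(j)$-set has $n-j$ up-neighbours, each $(j+1)$-set has $j+1$ down-neighbours) and a double-count gives $|N(\mathcal S)|\ge|\mathcal S|$.

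Granting the matching, Lemma~\ref{equal-part} gives $\be(F_k(K_{1,n})) = \max\{\binom{n}{k-1},\binom{n}{k}\}$, and since $\binom{n}{k}\ge\binom{n}{k-1}$ iff $k\le (n+1)/2$, this is exactly $\binom{n}{k}$ for $k\le (n+1)/2$ and $\binom{n}{k-1}$ for $k>(n+1)/2$, matching the claimed formula. (One should double check the boundary case $k=(n+1)/2$ when $n$ is odd: there $\binom{n}{k}=\binom{n}{k-1}$, so both branches agree, consistent with the "$\le$" in the first case.) The main obstacle is the Hall-condition verification for the middle-layer inclusion graph; it is routine but must be done carefully with the direction of the inequality depending on which side is smaller. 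An alternative to invoking Hall/Lemma~\ref{equal-part} is to note the lower bound $\be(F_k(K_{1,n}))\ge\max\{|\mathcal X|,|\mathcal Y|\}$ is immediate since each of $\mathcal X,\mathcal Y$ is independent, and then get the matching upper bound $\be \le |V| - \nu$ together with the König-type identity $\be = |V|-\nu$ for bipartite graphs — so it suffices to compute $\nu(F_k(K_{1,n})) = \min\{\binom{n}{k-1},\binom{n}{k}\}$, which again reduces to the same middle-layer matching fact. I would present whichever is shorter; invoking Lemma~\ref{equal-part} directly seems cleanest, since it packages both the lower and upper bounds at once.
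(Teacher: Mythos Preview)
Your proposal is correct and follows essentially the same route as the paper: identify the bipartition $\{\mathcal X,\mathcal Y\}$ by whether the center is present, observe the resulting graph is biregular with degrees $n+1-k$ and $k$, verify Hall's condition on the smaller side by the double-counting argument, and apply Lemma~\ref{equal-part}. The paper writes out the Hall inequality explicitly as $|N(\mathcal A)|\ge (n+1-k)|\mathcal A|/k\ge|\mathcal A|$, whereas you leave it as a ``routine'' double count; otherwise the arguments coincide.
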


\begin{proof} Let $V(G)=\{v_1, \dots, v_{n+1}\}$ and let $v_1$ be the central vertex of $G$. Since $\alpha(G)=n$, the assertion holds for $k \in \{1, n\}$. So we assume that $2 \leq k \leq n-1$. In this proof we take $R=\{v_1\}$ and $B=\{v_2, \dots, v_{n+1}\}$. Thus, the bipartition 
$\{\mathcal R, \mathcal B\}$ of $F_k(G)$ is given by 
$\mathcal R=\{A \in V(F_k(G)): v_1 \in A\}$ and
 $\mathcal B=V(F_k(G))\setminus \mathcal R$. Thus $|\mathcal R|=\binom{n}{k-1}$ and $|\mathcal B|=\binom{n}{k}$. Note that $F_k(G)$ is biregular: $d(A)=n+1-k$ for every $A \in \mathcal R$ and $d(B)=k$ for every $B \in \mathcal B$.
 
Suppose that $k \leq (n+1)/2$. Then $|\mathcal B|\geq |\mathcal R|$. Now we show
that $|N(\AA)| \geq |\AA|$ for any $\AA\subseteq \mathcal R$.
 Since $N(\AA)=\cup_{A\in \AA}N(A)\subseteq \BB$ and every vertex of $\mathcal B$ has degree $k$, then every vertex in $N(\AA)$ appears at most $k$ times in the disjoint union $\biguplus_{A\in \AA}N(A)$. Therefore
  $|N(\AA)| \geq (n+1-k)|\AA|/k\geq |\AA|$, because $n+1-k\geq k$.
From Hall's Theorem and Lemma~\ref{equal-part} we have 
$\alpha(F_k(G))=|\BB|=\binom{n}{k}$, as desired.

The case $k > (n+1)/2$ can be verified by a totally analogous argument.   
\end{proof}

The number $\alpha(F_2(K_{1,n-1}))$ is equal to $A000217(n-2)$, for $n \geq 4$, where $A000217$ is the sequence of  triangular numbers \cite{oeis}.

\begin{proposition}\label{super-bipartite} 
Let $G, B, R, \mathcal B, \mathcal R$ and $k$ be as in Remark \ref{rem:notation}.
If $\alpha(F_k(G))$ is equal to $\max \{|\RR|, |\BB|\}$ and $G'$ is a bipartite supergraph of $G$ with bipartition $\{R, B\}$, then $\alpha(F_k(G'))= \max \{|\RR|,|\BB|\}$.
\end{proposition}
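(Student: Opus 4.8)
The plan is to observe that adding edges to $G$ (while keeping the same bipartition $\{R,B\}$) can only add edges to the token graph $F_k(G)$, without changing its vertex set or its canonical bipartition $\{\RR,\BB\}$. Concretely, since $V(G')=V(G)$ and $E(G)\subseteq E(G')$, every edge of $F_k(G)$ is an edge of $F_k(G')$, so $F_k(G)$ is a spanning subgraph of $F_k(G')$. Moreover, the bipartition of $F_k(G')$ into sets according to the parity of $|R\cap A|$ is exactly $\{\RR,\BB\}$, the same partition used for $F_k(G)$, because it depends only on the vertex set and the choice of $\{R,B\}$, not on which edges are present. This is the one place where the hypothesis that $G'$ has bipartition $\{R,B\}$ (rather than some arbitrary bipartition) is essential.

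From here the argument splits into the trivial direction and the substantive direction. For the lower bound, $F_k(G)$ is a spanning subgraph of $F_k(G')$, so any independent set of $F_k(G)$ is an independent set of $F_k(G')$; hence $\be(F_k(G'))\ge \be(F_k(G)) = \max\{|\RR|,|\BB|\}$, using the hypothesis. For the upper bound, the key point is that $\RR$ and $\BB$ are each independent in $F_k(G')$ (they are the two sides of the bipartition), so $\be(F_k(G'))\ge\max\{|\RR|,|\BB|\}$ comes for free anyway, but more importantly we must rule out a larger independent set. Here I would invoke exactly the same mechanism already used in the paper: since $F_k(G')$ is bipartite with parts $\RR,\BB$, an independent set meeting both parts can be replaced by one at least as large contained in a single part — this is precisely the content of Lemma~\ref{equal-part} once we know $F_k(G')$ has a matching saturating the smaller side. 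So the crux is to produce a matching of the smaller of $\RR,\BB$ into the larger in $F_k(G')$.

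The main obstacle, then, is establishing that smaller-side matching in $F_k(G')$. I would get it from the hypothesis via König/Gallai-type reasoning: in $F_k(G)$ we already have $\be(F_k(G))=\max\{|\RR|,|\BB|\}$, and since $F_k(G)$ is bipartite with parts of sizes $|\RR|,|\BB|$, a maximum independent set of that size forces (by the König–Egerváry/Gallai identities, exactly as used in Theorem~\ref{indep-over-2}) a matching in $F_k(G)$ saturating the smaller side — indeed $\be + \nu = |V(F_k(G))|$ for bipartite graphs gives $\nu(F_k(G)) = |\RR\cup\BB| - \max\{|\RR|,|\BB|\} = \min\{|\RR|,|\BB|\}$. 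That matching is a matching in $F_k(G')$ as well, since $F_k(G)\subseteq F_k(G')$, and it saturates the smaller part of $F_k(G')$. Applying Lemma~\ref{equal-part} to $F_k(G')$ with $B,R$ replaced by the smaller and larger of $\RR,\BB$ yields $\be(F_k(G'))\le\max\{|\RR|,|\BB|\}$, which together with the lower bound completes the proof. The only care needed is the bookkeeping when $|\RR|=|\BB|$, which is harmless since then any saturating matching is perfect and Lemma~\ref{equal-part} still applies.
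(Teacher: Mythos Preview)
Your lower-bound step contains a genuine error: from ``$F_k(G)$ is a spanning subgraph of $F_k(G')$'' you conclude that ``any independent set of $F_k(G)$ is an independent set of $F_k(G')$''. This is backwards. Adding edges can destroy independence, not preserve it; the correct implication is that every independent set of $F_k(G')$ is independent in $F_k(G)$, so $\be(F_k(G'))\le \be(F_k(G))$. You do recover the lower bound a few lines later via the observation that $\RR$ and $\BB$ remain the bipartition classes of $F_k(G')$, but the sentence as written is false.

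More importantly, once you fix the direction of that monotonicity, the whole K\"onig--Egerv\'ary/Lemma~\ref{equal-part} detour for the upper bound becomes unnecessary. The paper's argument is a two-liner: since $E(F_k(G))\subseteq E(F_k(G'))$, every independent set of $F_k(G')$ is already independent in $F_k(G)$, hence $\be(F_k(G'))\le \be(F_k(G))=\max\{|\RR|,|\BB|\}$; and the reverse inequality is just Proposition~\ref{prop:bipartite} applied to $G'$. Your matching argument is correct and does yield the same conclusion, but it rederives, via $\be+\nu=|V|$ and Lemma~\ref{equal-part}, exactly the inequality $\be(F_k(G'))\le\max\{|\RR|,|\BB|\}$ that follows in one step from monotonicity plus the hypothesis. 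In short: you inverted which inequality is ``trivial'' and then worked hard to prove the one that actually is.
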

\begin{proof}
The equality $V(G)=V(G')$ implies $V(F_k(G))=V(F_k(G'))$ and $E(F_k(G))\subseteq E(F_k(G')$. From Proposition~\ref{prop:bipartite} it follows $\alpha(F_k(G')) \geq \max \{|\RR|,|\BB|\}$. On the other hand, since every independent set of $F_k(G')$ is an independent set of $F_k(G)$, we have $\alpha(F_k(G')) \leq \alpha(F_k(G))=\max \{|\RR|,|\BB|\}$.  
\end{proof}

\begin{theorem}\label{super: per-almost}
If $G'$ is a bipartite supergraph of $G$ with bipartition $\{R, B\}$, and $G$ has either a perfect matching or an almost perfect matching, then $\alpha(F_k(G'))=\max\{|\RR|,|\BB|\}$.
\end{theorem}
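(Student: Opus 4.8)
The strategy is to reduce Theorem~\ref{super: per-almost} to Proposition~\ref{super-bipartite} together with a case analysis driven by Theorem~\ref{th:match}. By Proposition~\ref{super-bipartite}, it suffices to prove that $\be(F_k(G)) = \max\{|\RR|,|\BB|\}$ in the base case, i.e., when $G$ itself is the graph with the perfect (or almost perfect) matching; the supergraph statement then comes for free. So the heart of the matter is: if $\nu(G) = \lfloor n/2\rfloor$, then $\be(F_k(G)) = \max\{|\RR|,|\BB|\}$, where $n = m+\ell$ (using $\ell$ for the second part size to avoid clashing with $n$ in the excerpt, though one may keep the excerpt's convention).

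First I would dispose of the odd-$k$ case together with the perfect-matching hypothesis: here Theorem~\ref{indep-over-2} already gives $\be(F_k(G)) = \binom{m+n}{k}/2$, and since $F_k(G)$ is bipartite with parts $\RR$ and $\BB$, a perfect matching forces $|\RR| = |\BB| = \binom{m+n}{k}/2$, so $\max\{|\RR|,|\BB|\} = \binom{m+n}{k}/2$ as well. For the remaining cases, the upper bound $\be(F_k(G)) \le |V(F_k(G))| - \nu(F_k(G)) = \binom{m+n}{k} - \nu(F_k(G))$ holds in any graph, and combining this with the lower bounds $\nu(F_k(G)) \ge \binom{m+n}{k}/2 - \ldots$ from Theorem~\ref{th:match}(2),(3) gives $\be(F_k(G)) \le \binom{m+n}{k} - \nu(F_k(G)) \le \tfrac12\binom{m+n}{k} + \tfrac12\binom{\lfloor n/2\rfloor}{\lfloor k/2\rfloor}$ (appropriately indexed). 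The task is then to check that this quantity equals $\max\{|\RR|,|\BB|\}$, i.e., that $|\,|\RR| - |\BB|\,| = \binom{\lfloor (m+n)/2\rfloor}{\lfloor k/2\rfloor}$ in the relevant parity cases — a binomial identity I would verify by a direct bijective/counting argument (pairing $k$-subsets by toggling membership of a fixed matched pair of vertices, the unpaired ones being exactly the isolated vertices of $F_k(G)$ identified in the proof of Theorem~\ref{th:match}). Conversely, $\RR$ (or $\BB$, whichever is larger) is an independent set, giving $\be(F_k(G)) \ge \max\{|\RR|,|\BB|\}$, which closes the base case. Finally, invoke Proposition~\ref{super-bipartite} to pass to the supergraph $G'$.

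The main obstacle I anticipate is the bookkeeping around parities: the statement of Theorem~\ref{th:match} splits into $n$ even/$k$ odd, $n$ even/$k$ even, and $n$ odd, and the "isolated vertices" count $\binom{\lfloor n/2\rfloor}{\lfloor k/2\rfloor}$ must be matched against $|\,|\RR|-|\BB|\,|$ in each subcase; one must be careful that the tightness clause of Theorem~\ref{th:match} (which says the bound is attained precisely for $G$ a matching) is what upgrades the inequality $\be \le \ldots$ to an equality, rather than merely an upper bound. I would also double-check the degenerate small orders ($n = 3, 4, 5$) separately, as is done throughout Section~\ref{match}, and confirm that the case $|\RR| = |\BB|$ is consistent with both the odd-$k$ argument and the even-$k$ argument (it forces $\binom{\lfloor n/2\rfloor}{\lfloor k/2\rfloor} = 0$, which cannot happen for admissible $k$, so in fact $|\RR| \ne |\BB|$ whenever $k$ is even or $n$ is odd — a sanity check worth stating).
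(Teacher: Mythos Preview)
Your proposal is correct and shares the paper's overall architecture: reduce to the base case via Proposition~\ref{super-bipartite}, then use Theorem~\ref{th:match} to control $\nu(F_k(G))$ and conclude. The difference lies in how the final step is executed. The paper argues structurally: when $G$ is itself a (almost) perfect matching, the isolated vertices of $F_k(G)$ counted in the tightness clause of Theorem~\ref{th:match} all lie in a single part ($\RR$ or $\BB$, depending on the parity of $\lfloor k/2\rfloor$), so the large matching guaranteed by Theorem~\ref{th:match} is in fact a matching of one part into the other, and Lemma~\ref{equal-part} finishes immediately. You instead go through the numerical inequality $\be(F_k(G))\le |V(F_k(G))|-\nu(F_k(G))$ and must then verify the identity $\bigl|\,|\RR|-|\BB|\,\bigr|=\binom{\lfloor n/2\rfloor}{\lfloor k/2\rfloor}$; this is a genuine extra computation (the generating-function argument $(1-x)^{|R|}(1+x)^{|B|}$ handles it cleanly), whereas the paper's route sidesteps it. On the other hand, your argument has the mild advantage that it applies directly to any bipartite $G$ with $\nu(G)=\lfloor n/2\rfloor$, without first passing to the matching subgraph.

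One remark: your worry about needing the \emph{tightness} clause of Theorem~\ref{th:match} is misplaced. You only use the \emph{lower} bound on $\nu(F_k(G))$, which already yields $\be(F_k(G))\le \max\{|\RR|,|\BB|\}$ once the binomial identity is checked; the matching lower bound $\be(F_k(G))\ge\max\{|\RR|,|\BB|\}$ is trivial. Tightness plays no role in your version.
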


\begin{proof} In view of Proposition~\ref{super-bipartite}, it is enough 
to show that if $G$ is either a perfect matching or an almost perfect matching, then $\alpha(F_k(G))=\max\{|\RR|,|\BB|\}$.

Suppose that $G$ is a perfect matching. Then $n=m$ and $|G|=2m$. If $k$ is odd, then, by Theorem~\ref{th:match} (1), $F_k(G)$ has a perfect matching. This fact together with Lemma \ref{equal-part} imply $\alpha(F_k(G))=\max\{|\RR|,|\BB|\}$. For $k$ even, Theorem~\ref{th:match} (2) implies: (i) that the set $\SS$ of isolated vertices of $F_k(G)$ has exactly $\binom{m}{k/2}$ elements (see the last paragraph of the proof of Theorem~\ref{th:match} (2)), and (ii) the existence of a matching $M$ of $F_k(G)$ such that $V(M)=V(F_k(G))\setminus \SS$. Now, from the definition of $\RR$ it follows that if $k/2$ is odd, then $\SS \subseteq \RR$, and if $k/2$ is even, then $\SS\subseteq \BB$. Therefore, we have that either $M$ is a matching of $\RR$ into $\BB$ or $M$ is a matching of $\BB$ into $\RR$. In any case, Lemma~\ref{equal-part} implies $\alpha(F_k(G))=\max\{\RR, \BB\}$.  

Now suppose that $G$ is an almost perfect matching. Then $|E(G)|=|B|=m=n-1$ and $G$ has exactly one isolated vertex in $R$, say $u$. 
From Theorem~\ref{th:match} (2) it follows: (i) that the set $\SS$ of isolated vertices of $F_k(G)$ has exactly $\binom{m}{\floor{ k/2}}$ elements (see the last paragraph of the proof of Theorem~\ref{th:match} (2)), and (ii) the existence of a matching $M$ of $F_k(G)$ such that $V(M)=V(F_k(G))\setminus \SS$. Again, it is easy to see that either $\SS \subseteq \RR$ or $\SS\subseteq \BB$. Then either $M$ is a matching of $\RR$ into $\BB$ or $M$ is a matching of $\BB$ into $\RR$. In any case, Lemma~\ref{equal-part} implies $\alpha(F_k(G))=\max\{|\RR|, |\BB|\}$.   
\end{proof}

Our next result is an immediate consequence of Proposition~\ref{prop:bipartite} and Theorem~\ref{super: per-almost}.

\begin{corollary}\label{k2-path-kmm}
Let $t$ be a positive integer. If $G\in \{P_{t}, K_{t,t}, K_{t, t+1}\} $ and $k$ is an integer such that $1\leq k \leq |G|-1$,  
then \[\alpha(F_k(G))=\max \{ r,    \binom{p}{k}-r \},\]
where $p:=|G|$ and $r:=\sum_{i=1}^{\lceil k/2\rceil} \binom{\lceil p/2 \rceil}{2i-1}\binom{\lfloor p/2 \rfloor}{k-2i+1}$.
\end{corollary}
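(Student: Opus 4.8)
The plan is to derive Corollary~\ref{k2-path-kmm} by checking that each graph in the family $\{P_t, K_{t,t}, K_{t,t+1}\}$ satisfies the hypotheses of Theorem~\ref{super: per-almost}, so that $\be(F_k(G))=\max\{|\RR|,|\BB|\}$, and then translating $|\RR|$ into the closed form $r=\sum_{i=1}^{\lceil k/2\rceil}\binom{\lceil p/2\rceil}{2i-1}\binom{\lfloor p/2\rfloor}{k-2i+1}$ via Proposition~\ref{prop:bipartite}.

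First I would fix the bipartition. For $G=K_{t,t}$ the natural bipartition $\{R,B\}$ has $|R|=|B|=t$, and $G$ itself is a bipartite graph on $p=2t$ vertices admitting a perfect matching (match the $i$-th vertex of $R$ to the $i$-th vertex of $B$). For $G=K_{t,t+1}$ take $|R|=t+1$, $|B|=t$; this has an almost perfect matching (saturating $B$, leaving one vertex of $R$ unmatched), and $p=2t+1$ with $\lceil p/2\rceil=t+1=|R|$, $\lfloor p/2\rfloor=t=|B|$. For $G=P_t$ the path on $t$ vertices, take the unique bipartition with colour classes of sizes $\lceil t/2\rceil$ and $\lfloor t/2\rfloor$; a path on $t$ vertices has a perfect matching when $t$ is even and an almost perfect matching when $t$ is odd, so in every case $\nu(P_t)=\lfloor t/2\rfloor$. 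In each case the hypothesis ``$G$ has a perfect matching or an almost perfect matching'' of Theorem~\ref{super: per-almost} is met (with $G'=G$, which is trivially a bipartite supergraph of itself with the same bipartition), so $\be(F_k(G))=\max\{|\RR|,|\BB|\}$ for all admissible $k$.

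Next I would identify $\max\{|\RR|,|\BB|\}$ with $\max\{r,\binom{p}{k}-r\}$. By the definition preceding Remark~\ref{rem:notation}, $\RR$ consists of the $k$-subsets $A$ of $V(G)$ with $|R\cap A|$ odd, and in all three cases above we have arranged $|R|=\lceil p/2\rceil$ and $|B|=\lfloor p/2\rfloor$. Writing $|R\cap A|=2i-1$ and summing over $i=1,\dots,\lceil k/2\rceil$ gives exactly $|\RR|=\sum_{i=1}^{\lceil k/2\rceil}\binom{\lceil p/2\rceil}{2i-1}\binom{\lfloor p/2\rfloor}{k-2i+1}=r$, precisely as in the proof of Proposition~\ref{prop:bipartite}; and $|\BB|=\binom{p}{k}-r$ since $\{\RR,\BB\}$ partitions $V(F_k(G))$, which has $\binom{p}{k}$ vertices. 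Hence $\be(F_k(G))=\max\{r,\binom{p}{k}-r\}$, which is the claimed formula. One should double-check the edge cases $k=1$ and $k=|G|-1$, but there $F_k(G)\simeq G$ (resp.\ $F_{n-k}(G)$) and the formula is consistent with $\be(G)=\max\{|R|,|B|\}$, which is already covered by Lemma~\ref{equal-part}.

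I do not expect any serious obstacle: this corollary is essentially bookkeeping, combining Theorem~\ref{super: per-almost} (the substantive input, itself resting on Theorem~\ref{th:match}) with the combinatorial identity of Proposition~\ref{prop:bipartite}. The only point requiring a little care is making sure the bipartition chosen for each of $P_t$, $K_{t,t}$, $K_{t,t+1}$ has colour classes of sizes $\lceil p/2\rceil$ and $\lfloor p/2\rfloor$ so that the symbol $r$ in the statement matches $|\RR|$ on the nose — in particular for the path $P_t$ with $t$ even both classes have size $t/2$ and for $t$ odd one has size $(t+1)/2$ and the other $(t-1)/2$, which is exactly $\lceil t/2\rceil$ and $\lfloor t/2\rfloor$. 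Once the bipartitions are pinned down, the result follows immediately by quoting the two earlier results.
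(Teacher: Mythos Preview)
Your proposal is correct and follows essentially the same approach as the paper, which simply states that the corollary is an immediate consequence of Proposition~\ref{prop:bipartite} and Theorem~\ref{super: per-almost}. You have just spelled out the routine verification of hypotheses (that each of $P_t$, $K_{t,t}$, $K_{t,t+1}$ contains a perfect or almost perfect matching respecting a bipartition with parts of sizes $\lceil p/2\rceil$ and $\lfloor p/2\rfloor$) and the identification $|\RR|=r$, $|\BB|=\binom{p}{k}-r$, exactly as intended.
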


It is a routine exercise to check that $\alpha(F_2(P_m))=\lfloor m^2/4 \rfloor$ and that sequence $\{\alpha(F_2(P_m))\}_{m\geq 0}$ coincides with A002620 in OEIS~\cite{oeis}.

The following conjecture has been motivated by the results of 
Corollary~\ref{k2-path-kmm} for  $\alpha(K_{t, t})$ and
 $\alpha(K_{t, t+1})$, and experimental results.  Our aim in the next section is to show Conjecture~\ref{conj-complete-bi} for $k=2$.  
 
\begin{conjecture}\label{conj-complete-bi}
If $G$ is a complete bipartite graph with partition $\{R, B\}$, then $\alpha(F_k(G))=\max \{|\RR|, |\BB|\}$.  
\end{conjecture}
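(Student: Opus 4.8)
The plan is to establish Conjecture~\ref{conj-complete-bi} for $k=2$, i.e. to show that if $G=K_{m,n}$ with bipartition $\{R,B\}$, $|R|=n\geq|B|=m$, then $\be(F_2(G))=\max\{|\RR|,|\BB|\}$. By Proposition~\ref{prop:bipartite} we already have $\be(F_2(G))\geq\max\{|\RR|,|\BB|\}$, so the whole content is the reverse inequality: every independent set of $F_2(G)$ has size at most $\max\{|\RR|,|\BB|\}$. First I would split into two regimes according to Proposition~\ref{prop:sizes}: the ``sparse'' regime where $|\BB|\geq|\RR|$ (equivalently $n-m\geq\tfrac{1+\sqrt{1+8m}}{2}$) and the ``dense'' regime where $|\RR|>|\BB|$. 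In the sparse regime I expect the bound $\be\le|\BB|$ to follow from exhibiting a matching of $\RR$ into $\BB$ in $F_2(G)$ and invoking Lemma~\ref{equal-part} (via Hall's Theorem); in the dense regime, symmetrically, one wants a matching of $\BB$ into $\RR$. So the real task reduces to verifying Hall's condition for the appropriate side of the bipartition of $F_2(K_{m,n})$.

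The concrete combinatorial model I would use: a vertex of $F_2(K_{m,n})$ is a $2$-subset, which is one of three types --- $\{r,r'\}$ with $r,r'\in R$ (call these $RR$-vertices), $\{b,b'\}$ with $b,b'\in B$ ($BB$-vertices), or a ``mixed'' pair $\{r,b\}$. Here $\RR$ consists exactly of the mixed pairs (odd intersection with $R$), and $\BB$ consists of the $RR$-vertices together with the $BB$-vertices. Two vertices are adjacent iff their symmetric difference is an edge of $K_{m,n}$, i.e. a mixed pair; concretely $\{r,b\}\sim\{r,b'\}$? No --- symmetric difference $\{b,b'\}$ is not an edge; rather $\{r,b\}\sim\{r',b\}$ has symmetric difference $\{r,r'\}$, not an edge either. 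The correct adjacencies are: $\{r,b\}\sim\{r,r'\}$ (symmetric difference $\{b,r'\}$, an edge) and $\{r,b\}\sim\{b,b'\}$ (symmetric difference $\{r,b'\}$, an edge), while $RR$- and $BB$-vertices are never adjacent to each other and the mixed pairs form an independent set among themselves. So $F_2(K_{m,n})$ is bipartite with parts $\RR$ (mixed) and $\BB$ ($RR\cup BB$), each mixed vertex $\{r,b\}$ joined to the $n-1$ vertices $\{r,r'\}$ and the $m-1$ vertices $\{b,b'\}$; each $RR$-vertex $\{r,r'\}$ joined to the $2m$ mixed vertices $\{r,b\},\{r',b\}$; each $BB$-vertex $\{b,b'\}$ joined to the $2n$ mixed vertices $\{r,b\},\{r,b'\}$. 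This explicit biregular-type description is what makes Hall's condition checkable.

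The key steps, in order: (1) record the model above and note $|\RR|=mn$, $|\BB|=\binom{m+n}{2}-mn=\binom{m}{2}+\binom{n}{2}$, matching Proposition~\ref{prop:sizes}. (2) In the dense regime ($|\RR|>|\BB|$, i.e. $n-m$ small): prove Hall's condition for $\BB$ into $\RR$. Given $\AA\subseteq\BB$, write $\AA=\AA_{RR}\cup\AA_{BB}$; bound $|N(\AA)|$ from below. A double-counting gives $|N(\AA)|\geq (2m|\AA_{RR}|+2n|\AA_{BB}|)/\max(\text{degree in }\RR)$, but the degree of a mixed vertex is only $m+n-2$, which is too weak when $\AA$ is spread out, so I would instead argue structurally: if $\AA_{RR}$ uses a set $R'\subseteq R$ of $R$-coordinates and $\AA_{BB}$ uses $B'\subseteq B$, then $N(\AA)\supseteq (R'\times B)\cup(R\times B')$ (as mixed pairs), whose size is $|R'|n+|B'|m-|R'||B'|$, while $|\AA|\leq\binom{|R'|}{2}+\binom{|B'|}{2}$; then it is elementary calculus/casework to show $\binom{|R'|}{2}+\binom{|B'|}{2}\leq |R'|n+|B'|m-|R'||B'|$ for all $0\le|R'|\le n$, $0\le|B'|\le m$ whenever $n\ge m$ --- this is where the hypothesis $k=2$ and $n\ge m$ gets used. (3) In the sparse regime, symmetrically verify Hall for $\RR$ into $\BB$: for $\AA\subseteq\RR$ a set of mixed pairs, let $R'$ and $B'$ be the coordinates appearing; then $N(\AA)$ contains all $RR$-pairs inside $R'$ and all $BB$-pairs inside $B'$ meeting the right vertices, giving $|N(\AA)|\ge\binom{|R'|}{2}+\binom{|B'|}{2}$-type lower bounds against $|\AA|\le|R'||B'|$, and the inequality $|R'||B'|\le\binom{|R'|}{2}+\binom{|B'|}{2}$ needs the ``gap'' hypothesis $n-m\ge\tfrac{1+\sqrt{1+8m}}{2}$ from Proposition~\ref{prop:sizes} to push through in the extremal cases. (4) Conclude via Lemma~\ref{equal-part} in each regime, and combine with the lower bound from Proposition~\ref{prop:bipartite} to get equality; handle the small/degenerate cases $m=1$ (already done in the star proposition) and $m=n$ (done in Corollary~\ref{k2-path-kmm}) separately if the Hall arguments degenerate.

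The main obstacle I anticipate is step (2)/(3): verifying Hall's condition when the test set $\AA$ is ``unbalanced'' --- e.g. $\AA$ entirely $RR$-vertices concentrated on a small $R'$ (so $|\AA|=\binom{|R'|}{2}$ can be as large as $\binom{n}{2}$ while $N(\AA)$ has only $|R'|n - \binom{|R'|}{2}$... wait, $|R'|n$) mixed vertices. The inequality $\binom{t}{2}\le tn-\binom{t}{2}$, i.e. $t-1\le n$, i.e. $t\le n+1$, is automatic since $t\le n$; so the pure-$RR$ case is fine, and the genuinely delicate case is mixed $\AA_{RR},\AA_{BB}$ both nonempty, where the cross term $-|R'||B'|$ in $|N(\AA)|$ fights against $\binom{|R'|}{2}+\binom{|B'|}{2}$ in $|\AA|$. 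I would reduce this to showing the quadratic form $f(x,y)=xn+ym-xy-\binom{x}{2}-\binom{y}{2}\ge 0$ on the integer box, check it is concave (negative definite Hessian after continuous relaxation), hence its minimum over the box is at a corner, and verify the four corners using $n\ge m\ge 1$ (and the Proposition~\ref{prop:sizes} gap in the sparse regime). If a counterexample lurks it would be at a corner, so this also doubles as a sanity check that the theorem's regime split is exactly the right one --- which is reassuring since it matches Proposition~\ref{prop:sizes}.
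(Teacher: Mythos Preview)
Your overall strategy---split by Proposition~\ref{prop:sizes}, verify Hall's condition on the smaller side of $\{\RR,\BB\}$, and conclude via Lemma~\ref{equal-part}---is sound, and it is genuinely different from the paper's route. The paper does not work with $K_{m,n}$ directly: instead it builds a sparse bipartite subgraph $G\subseteq K_{m,n}$ (a perfect matching on $[m]$ plus a carefully chosen injection $\phi$ attaching the $s=n-m$ extra $R$-vertices), verifies Hall for $F_2(G)$ by exhibiting explicit image sets $X_1',X_2',X_3',X_{2,3}$ of the same size as $X$, and then invokes Proposition~\ref{super-bipartite} to pass to $K_{m,n}$. Your approach trades that construction for a counting/concavity argument on the full complete bipartite graph; it is shorter and more conceptual when it works.

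There is, however, a real gap in your step~(3). For $\AA\subseteq\RR$ with $R$-support $R'$ and $B$-support $B'$, the bound $|N(\AA)|\ge\binom{|R'|}{2}+\binom{|B'|}{2}$ is too weak: the inequality $|R'||B'|\le\binom{|R'|}{2}+\binom{|B'|}{2}$ is simply false (take $|R'|=|B'|=3$), and the gap hypothesis on $n-m$ cannot rescue an inequality that involves only $|R'|,|B'|$. The fix is to use the whole neighborhood: a mixed pair $\{r,b\}\in\AA$ is adjacent to $\{r,r''\}$ for \emph{every} $r''\in R\setminus\{r\}$, not just $r''\in R'$. Hence $N(\AA)$ contains every $RR$-pair meeting $R'$ and every $BB$-pair meeting $B'$, giving
\[
|N(\AA)|\ \ge\ \Big[\tbinom{n}{2}-\tbinom{n-|R'|}{2}\Big]+\Big[\tbinom{m}{2}-\tbinom{m-|B'|}{2}\Big].
\]
Now the required inequality $xy\le \tfrac{x(2n-x-1)}{2}+\tfrac{y(2m-y-1)}{2}$ (with $x=|R'|$, $y=|B'|$) is concave in $(x,y)$, so it suffices to check corners of $[1,n]\times[1,m]$; the corner $(n,m)$ is exactly $|\RR|\le|\BB|$, the sparse-regime hypothesis, and the others are routine for $n\ge m\ge1$, $n+m\ge3$.

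Two smaller points: in step~(2) your neighborhood count should read $|R'|\,m+|B'|\,n-|R'||B'|$ (you have $m$ and $n$ swapped---an $RR$-pair's neighbors are $\{r,b\}$ with $b\in B$); and the Hessian in your concavity check is $-\bigl(\begin{smallmatrix}1&1\\1&1\end{smallmatrix}\bigr)$, which is negative \emph{semi}definite, not definite. Concavity still holds and the minimum over the box is still attained at a corner, so the argument survives, but the corner $(n,0)$ then genuinely requires $n\le 2m+1$, which you should derive from $s<s_0\le m+1$ rather than leave implicit.
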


%%%%%%%%%%%%%%%%%%%%%%%%%%%%
%%%%%%%%%%%%%%%%%%%%%%%%%%%%
%%%%%%%%%%%%%%%%%%%%%%%%%%%%
%%%%%%%%%%%%%%%%%%%%%%%%%%%%
%%%%%%%%%%%%%%%%%%%%%%%%%%%%
%%%%%%%%%%%%%%%%%%%%%%%%%%%%
%%%%%%%%%%%%%%%%%%%%%%%%%%%%

\section{Proof of Theorem \ref{th:Kmn}}\label{sec:Kmn}
 
As usual, for a nonnegative integer $t$, we use $[t]$ to denote the set 
$\{1,\ldots ,t\}$, and for a finite set $X$, $C^{X}_2$ to denote the set of all $2$-sets of $X$.

Throughout this section, $B,R,\BB, \RR,m$ and $n$ are as in Remark~\ref{rem:notation} for $G=K_{m,n}$ and $k=2$.  

Let $s:=n-m$ (we recall that $m\leq n$), $s_0:=\frac{1+\sqrt{1+8m}}{2}$, $B=\{b_1,\ldots ,b_m\}$, and 
 $R=\{r_1,\ldots ,r_m, \ldots ,r_{m+s}\}$. 
   
Note that Proposition~\ref{prop:k1n} implies Theorem \ref{th:Kmn} when
 $m=1$. Thus, we may assume that $m \geq 2$, and hence that $s_0 >2$. Similarly, 
 because Corollary~\ref{k2-path-kmm} implies Theorem  \ref{th:Kmn} when $s \in \{0, 1\}$, we also assume that $s\geq 2$.

As we have seen in Proposition~\ref{prop:sizes},
 for $k=2$ the value of $\max\{|\BB|, |\RR|\}$ depends on the value of 
 $\max\{s, s_0\}$. Depending on whether $s< s_0$ or $s_0 \geq s$ we use
  certain subgraphs $G_1$ and $G_2$ of $K_{m,n}$, which, as we will see, satisfy that 
  $\alpha(F_2(K_{m,n}))=\alpha(F_2(G_1))=|\RR|$ if $|\RR|>|\BB|$, and
  $\alpha(F_2(K_{m,n}))=\alpha(F_2(G_2))=|\BB|$ otherwise.
 
For $2\leq s < s_0$ consider the subgraph $G_1$ of $K_{m,n}$ defined as follows: since $2\leq s <s_0$ implies that $m > \binom{s}{2}$, we can take an injective function, say $\phi$, from $C^{[s]}_2$ to $[m]$. Let $V(G_1)=B\cup R$ and  
\[
E(G_1)=\{[b_i,r_i]: i \in [m]\} \bigcup \{ [b_{\phi(\{i,j\})}, r_{m+l}]: \{i,j\}\in C^{[s]}_2 \mbox{ and } l\in\{i,j\}\}.
\]

\begin{lemma}\label{k2bipartite1} If $G_1$ is as above, then 
$\alpha(F_2(G_1))=|\RR|=\max\{|\BB|, |\RR|\}$.
\end{lemma}

\begin{proof}
From $s<s_0$ and Proposition \ref{prop:sizes} we have that $|\RR|>|\BB|$.  Thus, by Lemma~\ref{equal-part} and Hall's Theorem, it is enough to show that for every $X\subseteq \BB$, $|N(X)|\geq |X|$. 

Let $X\subseteq \BB$. From the definition of $\BB$ and $k=2$ we know that $X$ is a collection of pairs of vertices in $B\cup R$ satisfying that each such pair have both elements in $B$ or both in $R$. Let $X_1$ be the set of pairs in $X$ with both elements in $B$, let $X_2$ be the set of pairs in $X$ which have at least one element in $\{r_1,\ldots, r_m\}$, and let 
$X_3$ be the set of pairs in $X$ which have both elements in $\{r_{m+1},\ldots, r_n\}$. Clearly, $\{X_1, X_2, X_3\}$ is a partition of $X$. 

In the rest of the proof, if $\{x_i,x_j\}\in X_1\cup X_2\cup X_3$, then we shall assume that $i<j$.  Let us define
 \begin{eqnarray*}
 X_1'&:=&\left\{\{r_i, b_j\} \colon \{b_i, b_j\} \in X_1\right\},\\
  X_2'&:=&\left\{\{b_i, r_j\}\colon \{r_i, r_j\} \in X_2\right\},\\
  X_3'&:=&\left\{\{b_{\phi(\{i, j\})}, r_{m+j}\} \colon \{r_{m+i}, r_{m+j}\} \in X_3\right\}.
  \end{eqnarray*}

%%%%%%%%%%%%%%
%%%%%%%%%%%%%%
%%%%%%%%%%%%%%

 Note that $X_1'\cap X_2'=\emptyset$ and $X_1'\cap X_3'=\emptyset$.
  From the definition of $E(G_1)$ it follows that $X_l' \subseteq N(X_l)$ for every $l\in \{1,2,3\}$. Hence 
  $X_1'\cup X_2'\cup X_3' \subseteq N(X)$.  Also note that $|X_l'|= |X_l|$ for every $l$ (for $l=3$ take into account that $\phi$ is injective). Notice that if $s\geq 2$  and   $X_2'\cap X_3' = \emptyset$, then 
 $|N(X)| \geq |X_1' \cup X_2' \cup X'_3| \geq |X|$, as required. Thus, we can assume that $s\geq 2$ and 
 $X_2'\cap X_3' \neq \emptyset$. Let
   \[
   X'_{2,3}:=\left\{\{b_{\phi(\{i, j\})}, r_{\phi(\{i, j\})}\} \colon \{b_{\phi(\{i, j\})}, r_{m+j}\} \in X_2'\cap X_3'\right\}.
   \] 
   First we show that $X'_{2,3} \subseteq N(X_2)$. Let $\{b_{\phi(\{i, j\})}, r_{\phi(\{i, j\})}\} \in X'_{2,3}$, then  $\{b_{\phi(\{i, j\})}, r_{m+j}\}$ belongs to $X_2'$ and  $\{r_{\phi(\{i, j\})}, r_{m+j}\} \in X_2$ by definition of $X_2'$. The result follows because
   \[
   \left[\{b_{\phi(\{i, j\})}, r_{\phi(\{i, j\})}\}, \{r_{\phi(\{i, j\})}, r_{m+j}\}\right] \in E\left(F_2(G_1)\right).
   \] 
   
    It is clear that $X'_{2,3} \cap X_l'=\emptyset$ for every $l$. Since $\phi$ is injective, the equality $\{b_{\phi(\{i, j\})}, r_{\phi(\{i, j\})}\}=\{b_{\phi(\{l, t\})}, r_{\phi(\{l, t\})}\}$, with $i<j$ and $l<t$, implies that $i=l$ and $j=t$, and hence $|X'_{2,3}|= |X_2'\cap X_3'|$.  Therefore, by the inclusion-exclusion principle we have   
   \[
  |N(X)|\geq  |X_1'\cup X_2'\cup X_3' \cup X_{2,3}|=|X_1'|+|X_2'|+|X_3'|+|X'_{2,3}| -|X_2'\cap X_3'| = |X|.
   \]
\end{proof}
 
For $s \geq s_0$ consider the subgraph $G_2$ of $G$ defined as follows: since
 $s \geq s_0$ implies that $m \leq \binom{s}{2}$, we can take an injective function, say $\phi$, from $[m]$ to $C^{[s]}_2$. 
Let $V(G_2)=B\cup R$ and
$$E(G_2) = \{[b_i, r_i]: i \in [m]\} \bigcup 
 \{[b_i,r_{m+l}]: i\in [m], l\in\{i_1,i_2\} \mbox{ where } \{i_1, i_2\}=\phi(i)\}.$$
 
 %%%%%%%%%%%%%%%%%%%%%%%%%%
 %%%%%%%%%%%%%%%%%%%%%%%%%%
 %%%%%%%%%%%%%%%%%%%%%%%%%%
 %%%%%%%%%%%%%%%%%%%%%%%%%%
\begin{lemma}\label{k2bipartite2}
If $G_2$ is as above, then $\alpha(F_2(G_2))=|\BB|=\max\{|\BB|, |\RR|\}$.
\end{lemma}

\begin{proof} From $s \geq s_0$ and Proposition \ref{prop:sizes} we have that 
$|\BB|\geq |\RR|$. Thus, by Lemma~\ref{equal-part} and Hall's Theorem, it is enough to show that 
for every $X\subseteq \RR$, $|N(X)|\geq |X|$. 

Let $X\subseteq \RR$. From the definition of $\RR$ and $k=2$ we know that $X$ is a collection of pairs of vertices in $B\cup R$ such that each pair is formed by a vertex in $B$ and the other one in $R$. Thus, $X$ corresponds naturally to a subset of edges of $K_{m,n}$.   

Without loss of generality, we assume that $B\cup R$ are points in the plane. 
 More precisely, for $i\in [m]$ and $j\in [n]$, we assume that $b_i$ and $r_j$ are the points with coordinates $(i,1)$ and
  $(j,0)$, respectively. Thus we shall think of the elements in $X$ as straight edges joining a vertex in $B$ to a vertex in $R$.  

 Let $X_1$ and $X_2$ be the set of edges in $X$ with positive and negative slope, respectively, and let $X_3$ be the vertical
edges in $X$. Clearly, $\{X_1, X_2, X_3\}$ is a partition of $X$. Let us define
\begin{eqnarray*}
X_1'&:=&\left\{\{b_j, b_i\} \colon [r_j, b_i] \in X_1\right\},\\
X_2'&:=&\left\{\{r_i, r_j\} \colon [b_i, r_j]\in X_2\right\},\\
X_3'&:=&\left\{\{r_{m+i_1}, r_{m+i_{2}}\} \colon \{b_{i}, r_{i}\} \in X_3 \mbox{ and } \phi(i)=\{i_1,i_2\} \right\}.
\end{eqnarray*}
From the definition of $E(G_2)$ it follows that $X_1'\cup X_2'\cup X_3' \subseteq N(X)\subseteq \BB$. Note that $X_1',X_2',$ and $X_3'$ are pairwise disjoint.
Since  $|X_l'|=|X_l|$ for $l\in \{1,2,3\}$ (because $\phi$ is injective), then, by the inclusion-exclusion principle:
\[
|N(X)| \geq |X_1'|+|X_2'|+|X_3'|= |X_1|+|X_2|+|X_3|= |X|.
\]
\end{proof}
\noindent{\em Proof of Theorem \ref{th:Kmn}}. Let $G_1$ and $G_2$ be as above. Clearly, exactly one of $G_1$ or $G_2$ exists and is a subgraph of $K_{m,n}$. Let $H$ be
such a subgraph. From Lemmas~\ref{k2bipartite1} and~\ref{k2bipartite2} we have 
$\alpha(F_2(H))=\max \{|\RR|, |\BB|\}$. This and Proposition~\ref{super-bipartite} imply $\alpha(F_2(H))=\alpha(F_2(K_{m,n}))$. 
$\square$

%%%%%%%%%%%%%%%%%%%%%%%%%%%%
%%%%%%%%%%%%%%%%%%%%%%%%%%%%
%%%%%%%%%%%%%%%%%%%%%%%%%%%%
%%%%%%%%%%%%%%%%%%%%%%%%%%%%
%%%%%%%%%%%%%%%%%%%%%%%%%%%%
%%%%%%%%%%%%%%%%%%%%%%%%%%%%
%%%%%%%%%%%%%%%%%%%%%%%%%%%%
%%% cycle graphs %%%%%%%%%%%%%%%%%%
%%%%%%%%%%%%%%%%%%%%%%%%%%%%

\section{Proof of Theorem~\ref{k2cicle}}\label{sec:C_p}
 
Again, we first need to give some preliminary results and notation. We start by stating recursive inequalities for $\alpha(F_k(G))$.  

\begin{lemma}\label{th:independent}
Let $G$ be a graph of order $n$. For $2\le k\le n-1$, we have
{\small \begin{equation}\label{eq:independent}
    \displaystyle\max_{v\in V(G)} \left\{\alpha\left(F_{k-1}(G-v)\right) + \alpha\left(F_k\left(G-N[v]\right)\right)\right\} \le \alpha\big(F_k(G)\big) \le \frac1{k}\,\displaystyle\sum_{v\in V(G)} \alpha\big(F_{k-1}(G-v)\big).
\end{equation}
}
\end{lemma}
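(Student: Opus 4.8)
The plan is to prove the two inequalities separately, in both cases exploiting how independent sets of $F_k(G)$ interact with the vertices of $G$ that each $k$-subset does or does not contain. For the \emph{lower bound}, fix a vertex $v\in V(G)$. I would partition the vertex set of $F_k(G)$ into those $k$-subsets that contain $v$ and those that do not. The $k$-subsets not containing $v$ that moreover avoid all of $N(v)$ span an induced copy of $F_k(G-N[v])$ inside $F_k(G)$; pick a maximum independent set $\II_1$ there, of size $\be(F_k(G-N[v]))$. The $k$-subsets containing $v$ correspond bijectively to $(k-1)$-subsets of $V(G-v)$, and two such subsets $\{v\}\cup A$, $\{v\}\cup A'$ are adjacent in $F_k(G)$ exactly when $A\triangle A'$ is an edge of $G-v$ (the token $v$ cannot move since moving it would require its partner to leave $v$), so this part induces a copy of $F_{k-1}(G-v)$; pick a maximum independent set $\II_2$ there, of size $\be(F_{k-1}(G-v))$. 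The key point is that $\II_1\cup\II_2$ is independent in $F_k(G)$: a subset in $\II_1$ omits $v$ and all its neighbours, while a subset in $\II_2$ contains $v$, so their symmetric difference contains $v$ together with at least one vertex not adjacent to $v$, hence cannot be a single edge. This gives $\be(F_k(G))\ge \be(F_{k-1}(G-v))+\be(F_k(G-N[v]))$ for every $v$, and taking the maximum over $v$ yields the left inequality.

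For the \emph{upper bound}, let $\II$ be a maximum independent set of $F_k(G)$, so $|\II|=\be(F_k(G))$. For each $v\in V(G)$ consider $\II_v:=\{A\in\II : v\in A\}$, the members of $\II$ using vertex $v$. As noted above, the map $A\mapsto A\setminus\{v\}$ identifies the subgraph of $F_k(G)$ induced on the $k$-subsets containing $v$ with $F_{k-1}(G-v)$, and this identification sends adjacent vertices to adjacent vertices; hence $\II_v$ corresponds to an independent set of $F_{k-1}(G-v)$, so $|\II_v|\le \be(F_{k-1}(G-v))$. Finally, each $A\in\II$ has exactly $k$ elements, so $A$ is counted in exactly $k$ of the sets $\II_v$; summing, $k|\II|=\sum_{v\in V(G)}|\II_v|\le \sum_{v\in V(G)}\be(F_{k-1}(G-v))$, which rearranges to the right inequality.

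The only mild subtlety I anticipate — and the step I would write most carefully — is verifying that the subgraph of $F_k(G)$ induced by the $k$-subsets containing a fixed vertex $v$ really is (isomorphic to) $F_{k-1}(G-v)$, i.e. that $v$ genuinely acts as a frozen token. This is where the argument could go wrong if $v$ had an edge to some vertex outside the token set, but that does not create an edge of $F_k(G)$ within this part because moving the token off $v$ changes membership of $v$, pushing the subset out of the set under consideration; one must also observe that the edge $A\triangle A'$ never involves $v$, so it is an edge of $G-v$ rather than just of $G$. I would also remark that the restriction $2\le k\le n-1$ guarantees all the smaller token graphs $F_{k-1}(G-v)$ and $F_k(G-N[v])$ are well defined (allowing the degenerate convention that $F_k$ of a graph on fewer than $k$ vertices, or the empty set of such subsets, contributes $0$), and that the bound is used in the paper with $G=C_p$, where $G-N[v]=P_{p-3}$ and $G-v=P_{p-1}$, the known values of whose $2$-token independence numbers (Corollary \ref{k2-path-kmm}) feed the induction for Theorem \ref{k2cicle}.
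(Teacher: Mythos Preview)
Your proposal is correct and follows essentially the same approach as the paper: for the upper bound you double-count via the sets $\II_v=\{A\in\II:v\in A\}$ and use the identification with $F_{k-1}(G-v)$, exactly as the paper does; for the lower bound you build the independent set as the union of a maximum independent set in $F_k(G-N[v])$ with the $v$-augmentation of one in $F_{k-1}(G-v)$, again matching the paper. Your write-up is in fact more careful than the paper's on the point you flagged---that the induced subgraph on $k$-subsets containing $v$ is genuinely $F_{k-1}(G-v)$ and that no edge can cross between $\II_1$ and $\II_2$---which the paper asserts without elaboration.
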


\begin{proof}
We begin by proving the right  inequality of (\ref{eq:independent}). Let $\II$ be an independent set of vertices in $F_k(G)$ with maximum cardinality. For $v\in V(G)$,
let $\II_v$ be the set formed by all the elements of $\II$ containing $v$. Since every vertex of $\II$ is a $k$-set of $V(G)$, then 
$k | \II| =\sum_{v\in V(G)}  |\II_v|$. Furthermore, note that the collection  $\{A\setminus\{v\} \colon~  A\in \II_v \}$ is an independent set of $F_{k-1}(G-v)$,
 and so $|\II_v | \le \alpha\big(F_{k-1}(G-v)\big)$ for every $v\in V(G)$. The desired inequality follows from previous relations and the fact that $\alpha\big(F_k(G)\big)= |\II|$. 

Now we show the left inequality. For $v\in V(G)$, let $\II_{\neg v}$ 
(respectively $\mathcal J_{\neg v}$) be an independent set in $F_{k-1}(G-v)$ 
(respectively $F_k(G-N[v])$) with maximum cardinality. Then 
$|\II_{\neg v}|=\alpha(F_{k-1}(G-v))$ and $|\mathcal J_{\neg v}|=\alpha(F_k(G-N[v]))$.
Let $\II_v$ be the collection of sets $\left\{A\cup\{v\} \colon~A\in \II_{\neg v}\right\}$. From the construction of $I_v$ and $J_{\neg v}$ it is easy to see that $\mathcal I_v\cap \mathcal J_{\neg v} =\emptyset$, and that $\mathcal I_v\cup \mathcal J_{\neg v}$ form an independent set of $F_k(G)$. Since the last two statements hold for every $v\in V(G)$, the required inequality follows.  
\end{proof}

\begin{remark}
The bounds for $\alpha(F_k(G))$ in (\ref{eq:independent}) are best possible: for instance, the left (respectively right) hand side of (\ref{eq:independent}) is reached when $G\simeq K_{1,3}$ and $k=2$ (respectively $G\simeq K_n$ and $k=2$). 
\end{remark}

We recall that a graph $H$ is vertex-transitive if given any two vertices $u$ and $v$ in $V(H)$, there is an automorphism of $H$ mapping $u$ to $v$.

\begin{corollary}\label{cor:indep}
Let $G$ be a vertex-transitive graph of order $n$ and let $w$ be any vertex in $G$. For $2\le k\le n-2$, we have
\[ \alpha\big(F_k(G)\big) \le \min \left\{ \frac{n}{k}\, \alpha\big(F_{k-1}(G-w)\big), \frac{n}{n-k}\, \alpha\big(F_{k}(G-w)\big) \right\}.\]
\end{corollary}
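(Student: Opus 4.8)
The plan is to derive Corollary \ref{cor:indep} directly from Lemma \ref{th:independent} by exploiting vertex-transitivity to replace the max/sum over $v \in V(G)$ with a single representative vertex $w$.

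First I would handle the right-hand inequality of (\ref{eq:independent}). Since $G$ is vertex-transitive, for every $v \in V(G)$ there is an automorphism of $G$ carrying $v$ to $w$, hence $G-v \simeq G-w$ and therefore $F_{k-1}(G-v) \simeq F_{k-1}(G-w)$, giving $\be(F_{k-1}(G-v)) = \be(F_{k-1}(G-w))$ for all $v$. Substituting into the upper bound of Lemma \ref{th:independent}, the sum $\frac1k \sum_{v \in V(G)} \be(F_{k-1}(G-v))$ collapses to $\frac{n}{k}\be(F_{k-1}(G-w))$, which is the first term in the minimum.

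Next I would obtain the second term. The idea is to apply the upper bound of Lemma \ref{th:independent} not to $G$ but to $F_k$ via the complementary parameter: recall $F_k(G) \simeq F_{n-k}(G)$, so $\be(F_k(G)) = \be(F_{n-k}(G))$. Since $G$ is vertex-transitive of order $n$ with $2 \le k \le n-2$, we have $2 \le n-k \le n-2$, so Lemma \ref{th:independent} applies to $F_{n-k}(G)$: its upper bound reads $\be(F_{n-k}(G)) \le \frac{1}{n-k}\sum_{v}\be(F_{n-k-1}(G-v))$. Again by vertex-transitivity this is $\frac{n}{n-k}\be(F_{n-k-1}(G-w))$. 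Now $G-w$ has order $n-1$, and $F_{n-k-1}(G-w) = F_{(n-1)-k}(G-w) \simeq F_{k}(G-w)$ by the complementation identity applied inside $G-w$. Hence $\be(F_k(G)) \le \frac{n}{n-k}\be(F_k(G-w))$, which is the second term. Taking the minimum of the two bounds yields the stated inequality.

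The only mild subtlety — the step I would be most careful about — is making sure all the index ranges are admissible so that Lemma \ref{th:independent} legitimately applies in both invocations: one needs $2 \le k \le n-1$ for the $F_k(G)$ application (which holds since $k \le n-2$) and $2 \le n-k \le (n)-1$ together with $n-k-1 \le (n-1)-1$ for the complementary one, all of which follow from $2 \le k \le n-2$; and one should note the degenerate cases (e.g.\ when $G-w$ is edgeless or $k$ equals an endpoint of its range) are covered since the relevant token graphs still make sense. No genuine obstacle arises here; the content is entirely in the bookkeeping of the isomorphisms $F_k \simeq F_{n-k}$ and $G-v \simeq G-w$.
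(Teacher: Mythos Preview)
Your proof is correct and follows essentially the same route as the paper: apply the upper bound of Lemma~\ref{th:independent} once directly (collapsing the sum via vertex-transitivity), and once with $k$ replaced by $n-k$, then use the isomorphisms $F_k(G)\simeq F_{n-k}(G)$ and $F_{n-k-1}(G-w)\simeq F_k(G-w)$ to rewrite the second bound. Your explicit verification of the index ranges is a welcome addition that the paper leaves implicit.
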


\begin{proof} 
 Since $G$ is vertex-transitive, then $\alpha(F_{k-1}(G-w))=\alpha(F_{k-1}(G-u))$ 
for any $u\in V(G)$. From this and Theorem \ref{th:independent} it follows that
 
$$\alpha(F_k(G)) \le \frac{n}{k}  \alpha (F_{k-1}(G-w)).$$

In a similar way we can deduce that 

$$\alpha(F_{n-k}(G)) \le \frac{n}{n-k}  \alpha (F_{n-k-1}(G-w)).$$

The desired inequality follows from the previous inequality and considering that  $ F_{k}(G) \simeq F_{n-k}(G)$, and that $F_{k}(G-w)\simeq F_{(n-1)-k}(G-w)$.
\end{proof}

Applying Lemma \ref{th:independent} and Corollary \ref{cor:indep} to $G\simeq C_n$ and $G\simeq K_n$, we have the following corollary (we remark that equation (\ref{eq:IndKn}) is in fact a theorem of Johnson \cite{Jo}):

\begin{corollary}\label{p:indep}
For $2\le k\le n-2$ we have
{\footnotesize \begin{equation}\label{eq:IndCn}
    \alpha\big(F_{k-1}(P_{n-1})\big) + \alpha\big(F_k(P_{n-3})\big) \le \alpha\big(F_k(C_n)\big) \le \min \left\{ \frac{n}{k}\, \alpha\big(F_{k-1}(P_{n-1})\big), \frac{n}{n-k}\, \alpha\big(F_{k}(P_{n-1})\big) \right\}
\end{equation}
}
and
{\footnotesize 
\begin{equation}\label{eq:IndKn}
    \alpha\big(J(n-1,k-1)\big) \le \alpha\big(J(n,k)\big) \le \min \left\{ \frac{n}{k}\, \alpha\big(J(n-1,k-1)\big), \frac{n}{n-k}\, \alpha\big(J(n-1,k)\big) \right\}.
\end{equation}
}
\end{corollary}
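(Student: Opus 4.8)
The plan is to obtain both displays as immediate specializations of Lemma~\ref{th:independent} and Corollary~\ref{cor:indep} to the vertex-transitive graphs $C_n$ and $K_n$; since both graphs are vertex-transitive, the only real work is to recognize, up to isomorphism, the subgraphs $G-v$ and $G-N[v]$ that appear on the right-hand sides of those statements, and to handle a small amount of boundary bookkeeping.

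For (\ref{eq:IndCn}) I would begin from the fact that $C_n$ is vertex-transitive, so the quantity inside the maximum in the left-hand side of (\ref{eq:independent}) is the same for every $v\in V(C_n)$. Then I would identify the two relevant subgraphs: deleting a single vertex from a cycle leaves a path, so $C_n-v\simeq P_{n-1}$, and deleting $v$ together with its two neighbours leaves $C_n-N[v]\simeq P_{n-3}$ (with the understanding that $P_t$ for $t\le 0$ is the empty graph). Substituting these isomorphisms into the left inequality of Lemma~\ref{th:independent} gives $\be(F_{k-1}(P_{n-1}))+\be(F_k(P_{n-3}))\le \be(F_k(C_n))$, and substituting $C_n-w\simeq P_{n-1}$ into Corollary~\ref{cor:indep} gives the matching upper bound $\be(F_k(C_n))\le\min\{\frac nk\,\be(F_{k-1}(P_{n-1})),\ \frac{n}{n-k}\,\be(F_k(P_{n-1}))\}$.

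For (\ref{eq:IndKn}) I would argue in exactly the same way with $G\simeq K_n$, using that $K_n$ is vertex-transitive and that $F_j(K_m)\simeq J(m,j)$ by the definition recalled in the introduction. Here $K_n-v\simeq K_{n-1}$, while $N[v]=V(K_n)$, so $K_n-N[v]$ is the null graph; since $k\ge 1$ there are no $k$-subsets of the empty vertex set and hence $\be(F_k(K_n-N[v]))=0$. Plugging this into Lemma~\ref{th:independent} yields $\be(J(n-1,k-1))\le\be(J(n,k))$, and Corollary~\ref{cor:indep} yields $\be(J(n,k))\le\min\{\frac nk\,\be(J(n-1,k-1)),\ \frac{n}{n-k}\,\be(J(n-1,k))\}$; this reproves the theorem of Johnson~\cite{Jo}.

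There is no serious obstacle, but two small points deserve attention. First, one must check that every invocation is legitimate: Lemma~\ref{th:independent} requires $2\le k\le n-1$ and Corollary~\ref{cor:indep} requires $2\le k\le n-2$, which is precisely the hypothesis assumed in the corollary. Second is the degenerate case at the top of the range: when $k=n-2$ the path $P_{n-3}$ has only $n-3<k$ vertices, so $F_k(P_{n-3})$ has empty vertex set and the term $\be(F_k(P_{n-3}))$ must be read as $0$; this is consistent with the stated inequality, so no separate case analysis is needed provided the empty-vertex-set convention is made explicit.
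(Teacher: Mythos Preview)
Your proposal is correct and follows exactly the approach the paper intends: the paper simply states that the corollary is obtained by applying Lemma~\ref{th:independent} and Corollary~\ref{cor:indep} to $G\simeq C_n$ and $G\simeq K_n$, and you have spelled out precisely those specializations (including the identifications $C_n-v\simeq P_{n-1}$, $C_n-N[v]\simeq P_{n-3}$, $K_n-v\simeq K_{n-1}$, and $K_n-N[v]=\emptyset$) together with the boundary bookkeeping the paper leaves implicit.
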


As $F_2(C_{3})\simeq C_3$, then $\alpha(F_2(C_{3}))=1$.  Thus for the rest of this section we assume that $p\geq 4$. 

Let $V(C_p):= \{1, \dots, p\}$ and 
$E(C_p):=\{[i, {i+1}]\colon i=1, \ldots , p-1\}\cup \{[p, {1}]\}.$
If $X, Y \subseteq V\left(F_2(C_{p})\right)$,  we say that $X$ and $Y$ are  {\it linked} in $F_2(C_{p})$ if and only if $F_2(C_{p})$ contains an edge $[A, B]$ such that $A \in X$ and $B\in Y$. We use $X\approx Y$ to denote that $X$ and $Y$ are linked in $F_2(C_{p})$. If $A, B \in V\left(F_2(C_{p})\right)$, we use $A\triangle B$ to denote the symmetric difference between $A$ and $B$. Recall that for $A, B \in V\left(F_2(C_{p})\right)$,  $[A, B]\in E(F_2(C_p))$ if and only if either $A \triangle B=\{t, {t+1}\}$ for 
$1\leq t \leq p-1$, or $A \triangle B=\{1, p\}$.  

For $i=1, \ldots , p-1$, let  $L_i:=\left\{\{j, p-(i-j)\right\}\colon 1\leq j \leq i\} \subseteq V(F_2(C_p))$ (see Figure~\ref{t2-cycle}). 
Each assertion in the following observation follows easily from the definition of $L_i$.  

\begin{figure}[ht]
  \begin{center}
   \includegraphics[width=4.5in]{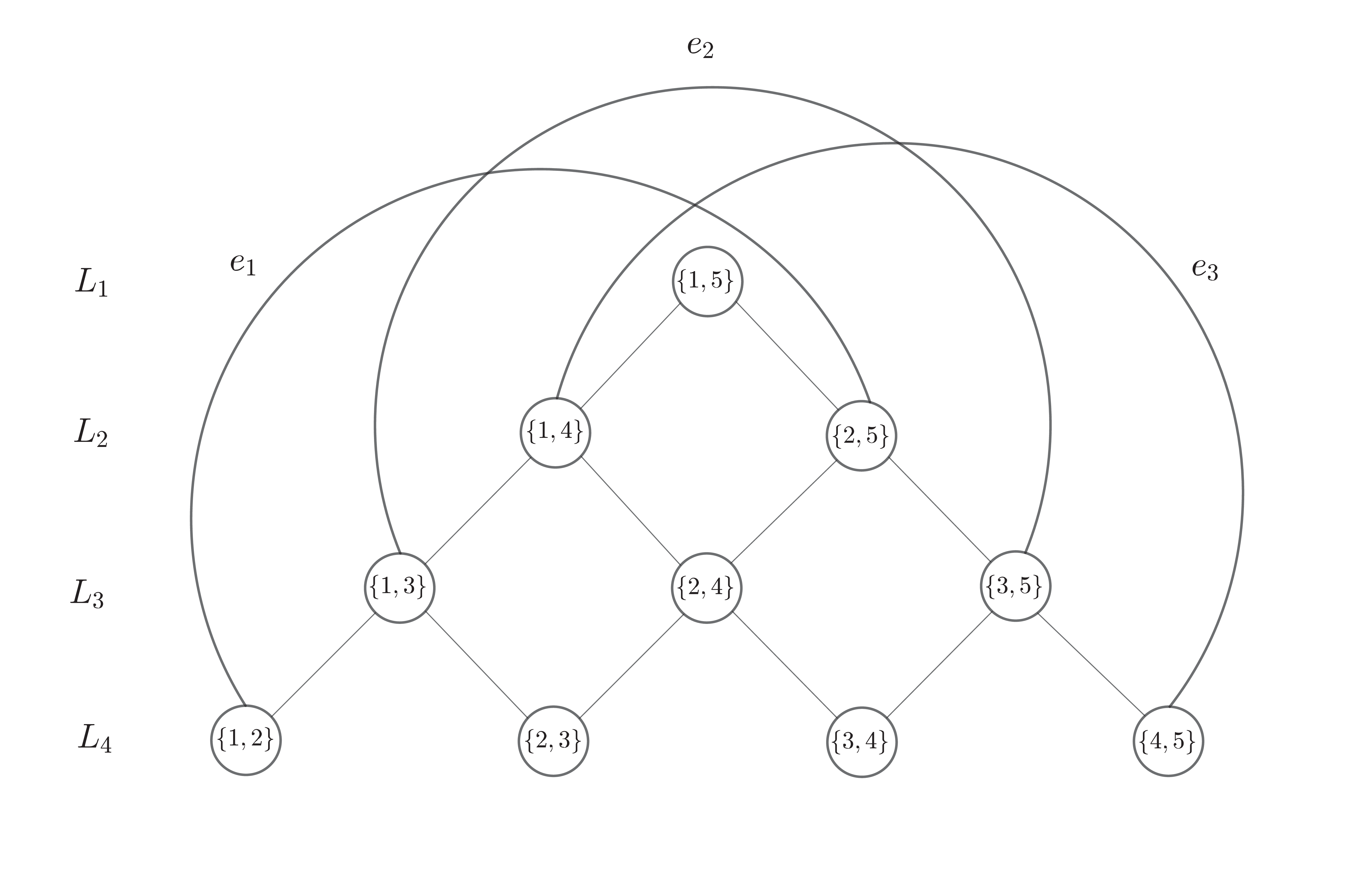}
  \end{center}
\caption{\small  Here is shown $F_2(C_5)$. Note that $F_2(P_5)\simeq F_2(C_5)\setminus\{e_1, e_2, e_3\}$ and that $L_1=\{\{1,5\}\}, \dots,  L_4=\{\{1, 2\}, \{2, 3\}, \{3, 4\}, \{4, 5\}\}$.}
 \label{t2-cycle}
\end{figure}

\begin{observation}\label{obs-cycle}  For $p$ and $L_i$ as above, we have that:
\begin{enumerate}
\item If  $i\in \{1,\ldots ,p-2\}$, then $L_i\approx L_{i+1}$ and $[\{i, p\},\{i+1, p\}]$ is an edge of $F_2(C_p)$ witnessing this fact.
\item If $i\in \{2,\ldots ,p-1\}$, then $L_i\approx L_{p-i+1}$ and $[\{1, i\},\{i, p\}]$ is an edge of $F_2(C_p)$ witnessing this fact.
\item If $L_i\approx L_j$ for some $i,j \in \{1,\ldots ,p-1\}$ with $i\neq j$, then $L_i\approx L_j$ is one of the described in (1) or (2).

\end{enumerate}
\end{observation}

\noindent {\em Proof of Theorem~\ref{k2cicle}}. 
First, we show that $\alpha(F_2(C_p)) \leq \floor{p \floor{p/2}/2}$. 
 
By Corollary~\ref{p:indep}  and Corollary~\ref{k2-path-kmm}, we have that
\[
\alpha(F_2(C_p)) \leq \min \left \{p/2{\lceil}(p-1)/2{\rceil}, p/(p-2)\floor{(p-1)^2/4}\right\}.
\]
If $p=2t$, then $p/2{\lceil}(p-1)/2{\rceil}=p/(p-2)\floor{(p-1)^2/4}$ and hence 
$\alpha(F_2(C_p)) \leq p^2/4$. As $p$ is even, $p^2/4=\floor{p\floor{p/2}/2}$, as desired.

Now, if $p=2t+1$, we have
\[
\alpha(F_2(C_p)) \leq (p/2){\lceil}(p-1)/2{\rceil}=pt/2.
\]
Thus $\alpha(F_2(C_p)) \leq \lfloor p t / 2 \rfloor =  \floor{p \floor{p/2}/2}$, because $t= \lfloor p / 2 \rfloor$.

Now we show that $\alpha(F_2(C_p)) \geq \floor{p \floor{p/2}/2}$. 

Let $i \in \{1, \dots, p-1\}$, and let $t=\lfloor p/2 \rfloor$. 
Note that if $p=2t$, then $L_i$ is an independent set of $F_2(C_p)$. For $p=2t+1$ we have that $L_i$ is also an independent except when 
$i = t+1$. 

%%%%%%%%%%%%%%%%%%%%%%%%%%%%%%%%%%%%%%%%%%%%%%%%
%%%%%%%%%%%%%%%%%%%%%%%%%%%%%%%%%%%%%%%%%%%%%%%%

{\it Case 1.} Suppose that $p=2t$. From previous paragraph and Observation~\ref{obs-cycle} we have that 
 $I=L_1 \cup L_3 \cup  \dots \cup L_{p-1}$ is an independent set of $F_2(C_p)$. 
 Since $|I|=p^2/4$, and $p^2/4=\floor{p\floor{p/2}/2}$ because $p$ is even, then we are done.\\

{\it Case 2.} Suppose that $p=2t+1$. We split the rest of the proof depending on whether $t$ is odd or even.

{\it Case 2.1.}  $t$ is odd. From Observation \ref{obs-cycle} it follows that $$\{L_1, L_3, \dots ,L_t, L_{t+3}, L_{t+5}, \dots, L_{p-1}\}$$ is a collection of pairwise non-linked independent sets. Then, 
\[
 I=L_1 \cup L_3 \cup \dots \cup L_t \cup L_{t+3} \cup L_{t+5} \cup \dots \cup L_{p-1}
\]  is an independent set in $F_2(C_p)$. But 
\[
|I|=\left(1+3+\dots +t\right)+\left((t+3)+ (t+5) + \dots + (p-1)\right)=\frac{1}{2}(tp-1)=\floor{p \floor{p/2}/2}.
\]

{\it Case 2.2.}  $t$ is even. Similarly, $\{L_1, L_3, \dots, L_{t-1}, L_{t+2}, L_{t+4}, \dots, L_{p-1}\}$
  is a collection of pairwise non-linked independent sets, and hence
\[
I=L_1 \cup L_3 \cup \dots \cup L_{t-1} \cup L_{t+2} \cup L_{t+4} \cup \dots \cup L_{p-1}
\]
 is an independent set in $F_2(C_p)$. In this case we have that
 \[
 |I|=\left(1+3+ \dots + t-1\right)+\left((t+2)+(t+4)+\dots +p-1\right)=\frac{1}{2}tp=\floor{p \floor{p/2}/2}. \mbox{   } \square \]

\section*{Acknowledgments}
J. L. was partially supported by CONACyT Mexico
grant 179867. L. M. R. was partially supported by PROFOCIE (2015-2018) grant, trough UAZ-CA-169.


\begin{thebibliography}{50}

\bibitem{alavi2} Y. Alavi, M. Behzad, P. Erd\"os, and D. R. Lick. Double vertex graphs. {\it J. Combin. Inform. System Sci.}, {\bf 16}(1) (1991), 37--50.

\bibitem{alavi3}  Y. Alavi, D. R. Lick and J. Liu. Survey of double vertex graphs, {\it Graphs Combin.}, {\bf 18}(4) (2002), 709--715.

\bibitem{alavi} S. H. Alavi, A generalization of Johnson graphs with an application to triple
factorisations, {\it Discrete Math.} {\bf 338}(11) (2015) 2026--2036.


\bibitem{alba} H. de Alba, W. Carballosa, D. Duarte and L. M. Rivera, Cohen-Macaulayness of triangular graphs, {\it Bull. Math. Soc. Sci. Math. Roumanie}, {\bf 60} (108) No. 2, (2017),  103--112.

\bibitem{alzaga} A. Alzaga, R. Iglesias, and R. Pignol, Spectra of symmetric powers of graphs and the
Weisfeiler-Lehman refinements, {\it J. Comb. Theory B} {\bf 100}(6), (2010) 671--682.

\bibitem{asra} A. S. Asratian, Bipartite graphs and their applications. Vol. 131. Cambridge University Press, 1998.

\bibitem{aude}  K. Audenaert, C. Godsil, G. Royle, and T. Rudolph, Symmetric squares of graphs, {\it J. Combin. Theory Ser. B} {\bf 97} (2007), 74--90.

\bibitem{auletta} V. Auletta, A. Monti, M. Parente, P. Persiano, A linear-time algorithm
for the feasibility of pebble motion on trees, {\it Algorithmica} {\bf 23} (3) (1999)
223--245.

\bibitem{barghi}  A. R. Barghi and I. Ponomarenko, Non-isomorphic graphs with cospectral symmetric
powers. {\it Electr. J. Comb.} {\bf 16}(1), 2009.

\bibitem{brou} A. E. Brouwer, Bounds on $A(n,4,w)$, \\
https://www.win.tue.nl/~aeb/codes/Andw.html$\#$d4.

\bibitem{brou2} A. E. Brouwer, T. Etzion, Some new distance-$4$ constant weight codes,
{\it Adv. Math. Commun.} {\bf 5}(3) (2011) 417--424.

\bibitem{caline} G. Calinescu, A. Dumitrescu, and J. Pach, Reconfigurations in graphs and grids.
{\it SIAM Journal on Discrete Mathematics}, {\bf 22}(1), (2008) 124--138.

\bibitem{token2} W. Carballosa, R. Fabila-Monroy, J. Lea\~nos and L. M. Rivera, Regularity and planarity of token graphs, {\it Discuss. Math. Graph Theory}, {\bf 37}(3) (2017), 573--586. 


\bibitem{edmonds} J. Edmonds, Paths, trees, and flowers, {\it Canad. J. Math. } {\bf 17} (1965) 449--467.

\bibitem{etzion} T. Etzion, S. Bitan, On the chromatic number, colorings, and codes of the
Johnson graph, {\it Discrete Appl. Math.} {\bf 70}(2) (1996) 163--175.


\bibitem{FFHH} R. Fabila-Monroy, D. Flores-Pe\~naloza, C. Huemer, F. Hurtado, J. Urrutia and D. R. Wood, Token graphs, {\it Graph Combinator.} {\bf 28}(3) (2012), 365--380.



\bibitem{vanden} J.  van den Heuvel, The complexity of change., {\it Surveys in combinatorics} {\bf 409} (2013), 127--160.

\bibitem{Jo} S. M. Johnson, A new upper bound for error-correcting codes, {\it IRE Trans. Inform. Theory} {\bf 8}(3) (1962), 203--207.

\bibitem{karp} R. Karp, Reducibility among combinatorial problems, in: Complexity of computer computations (E. Miller and J. W. Thatcher, eds.) Plenum Press, New York, (1972), 85--103.


\bibitem{miller} D. Kornhauser, G. Miller, P. Spirakis, Coordinating pebble motion on
graphs, the diameter of permutations groups, and applications, in: {\it Proc.
25th IEEE Symposium on Foundations of Computer Science (FOCS)},
IEEE, 1984, pp. 241--250.


\bibitem{leatrujillo} J. Lea\~nos and A. L. Trujillo-Negrete, The connectivity of token graphs, {\it Graphs and Combinatorics, } {\bf 32}(4) (2018), 777-790.

\bibitem{mira} K. G. Mirajkar, K. G. and Y. B. Priyanka, Traversability and Covering Invariants of Token Graphs, {\it Mathematical Combinatorics}, {\bf 2}, 132--138 (2016).

\bibitem{ratner} D. Ratner and M. Warmuth, The $(n^2-1)$-puzzle and related relocation problems, {\it Journal of Symbolic Computation}, {\bf 10}(2), (1990) 111--137.

\bibitem{riyono} H. Riyono, Hamiltonicity of the graph $g(n; k)$ of the Johnson scheme, Jurnal
Informatika {\bf 3} (2007) 41--47.

%%%%
\bibitem{rudolph} T. Rudolph, Constructing physically intuitive graph invariants, arXiv:quant-ph/0206068 (2002).

\bibitem{oeis}
N. J. A. Sloane,
\newblock The On-Line Encyclopedia of Integer Sequences, http://oeis.org.

\bibitem{gomez} J. M. G. Soto, J. Lea\~nos, L. M. R\'ios-Castro and L. M. Rivera, The packing number of the double vertex graph of the path graph, {\it Discrete Appl. Math.}, {\bf 247} (2018), 327--340.



\bibitem{terwilliger}  P. Terwilliger, The Johnson graph $J(d, r)$ is unique if $(d, r) \neq (2; 8)$, {\it Discrete
Math.} {\bf 58}(2) (1986) 175--189.


\bibitem{volk}
L. Volkmann, A characterization of bipartite graphs with independence number half their order, {\it Australas. J. Combin.}, {\bf 41} (2008), 219--222. 

\bibitem{wat}
J. J. Watkins, Across the board: the mathematics of chessboard problems, Princeton University Press, 2007.

\bibitem{yama} K. Yamanaka, E. D. Demaine, T. Ito, J. Kawahara, M. Kiyomi, Y. Okamoto,
T. Saitoh, A. Suzuki, K. Uchizawa, and T. Uno, {\it Swapping labeled tokens on
graphs}, FUN 2014 Proc., pages 364--375. Springer, 2014.

 

\end{thebibliography}
\end{document}